\def\Rp{{\mathbb R^+}}
\def\Z{{\mathbb Z}}
\def\N{{\mathbb N}}
\def\Zp{{\mathbb Z^+}}
\def\R{{\mathbb R}}
\def\P{{\mathbb P}}
\def\E{{\mathbb E}}
\def\I{{\mathbb I}}
\newtheorem{theorem}{Theorem}
\newtheorem{corollary}{Corollary}
\newtheorem{proposition}{Proposition}
\newtheorem{example}{Example}
\title[Harmonic functions for transition kernels]
{Harmonic functions and stationary distributions for
asymptotically homogeneous transition kernels on $\Zp$}
\author{D. Denisov, D. Korshunov, and V. Wachtel}
\begin{document}
\maketitle

\begin{abstract}
We suggest a method for constructing positive harmonic functions
for a wide class of transition kernels on $\Zp$. We also find
natural conditions under which these functions have positive
finite limits at infinity. Further, we apply our results on harmonic
functions to asymptotically homogeneous Markov chains on $\Zp$
with asymptotically negative drift. More precisely, assuming that
Markov chain satisfy Cram\'er's condition, we study the tail
asymptotics of the stationary distribution. In particular,
we clarify the influence of the rate of convergence of jumps of
the chain towards the limiting distribution.

{\it Keywords}:
transition kernel,
harmonic function,
Markov chain,
stationary distribution,
renewal function,
exponential change of measure
\end{abstract}

\section{Introduction}
\label{sec:introduction}

\noindent Let $Q$ be a nonnegative finite transition kernel
on $\Zp$, that is, $Q(i,j)\ge 0$ and
$$
Q(i,\Zp)=\sum_{j=0}^\infty Q(i,j)<\infty
\quad\mbox{ for every }i\in\Zp.
$$
We additionally assume that $Q(i,\Zp)>0$ for every $i\in\Zp$.
This kernel is also assumed to be irreducible in a sense that,
for every $i$ and $j$, there exists $n$ such that $Q^n(i,j)>0$.

The function $u(i)$ is called {\it harmonic} if $Qu=u$ which means
$$
\sum_{j=0}^\infty Q(i,j)u(j)=u(i)\quad\mbox{ for every }i\in\Zp.
$$
In this paper we only consider {\it nonnegative} harmonic functions.
Pruitt \cite{Pruitt} has found sufficient and necessary conditions for
existence of such a function. But these conditions are quite hard
to verify. Furthermore, his results do not give any information on the
limiting, as $i\to\infty$, behaviour of harmonic functions. Since this
information is important for the study of asymptotic properties of
Markov chains (see, for example, Foley and McDonald \cite{FD}),
we are interested in a constructive approach to harmonic functions
which would allow to determine their asymptotics.

Every transition kernel can be seen as a combination of a stochastic
transition kernel and of a total mass evolution. Indeed, if we
consider the following stochastic transition kernel:
$$
P(i,j):=\frac{Q(i,j)}{Q(i,\Zp)},
$$
and a Markov chain $X_n$ whose transition probabilities are $P(i,j)$,
then we have, for all $n\ge 1$ and $i$, $j\in\Zp$,
$$
Q^n(i,j)=\E_i\left[\prod_{k=0}^{n-1}Q(X_k,\Zp);X_n=j\right];
$$
hereinafter $\E_i$ means the expectation given $X_0=i$.

We call $X_n$ an {\it underlying} Markov chain.

Assume that, for every state $i$,
\begin{equation}\label{cond.1}
\E_i\prod_{n=0}^\infty \max(Q(X_n,\Zp),1)<\infty.
\end{equation}
This condition makes the following function correctly defined:
\begin{equation}\label{def.f}
f(i):=\E_i\prod_{n=0}^\infty Q(X_n,\Zp)\in[0,\infty).
\end{equation}
Under the condition \eqref{cond.1}, the function $f$ is
a harmonic function for the kernel $Q$. Indeed,
it follows by the conditioning on $X_1$:
\begin{eqnarray*}
f(i) &=& Q(i,\Zp)\sum_{j=0}^\infty P(i,j)
\E\biggl\{\prod_{n=1}^\infty Q(X_n,\Zp)\big|X_1=j\biggr\}\\
&=& \sum_{j=0}^\infty Q(i,j) f(j).
\end{eqnarray*}

In the next section we answer, in particular, the following question.
What are natural conditions that are sufficient for \eqref{cond.1}
in the case when $Q$ is transient?
These sufficient conditions are presented in Proposition
\ref{th:f.exists} and they guarantee that
$\limsup_{i\to\infty}f(i)\le 1$.
Another question is what conditions guarantee that $f(i)$
is positive function and, moreover, $\liminf_{i\to\infty}f(i)\ge 1$.
It is answered in Proposition \ref{th:f>0} in the next section.
Combining these statements we found sufficient conditions for
the existence of a harmonic function satisfying $\lim_{i\to\infty}f(i)=1$.

The expression \eqref{def.f} for the harmonic function $f(i)$
originates from the following two particular cases.
The first simple particular case is provided by stochastic
kernel $Q$ where we have harmonic function $f(i)\equiv 1$
which is the unique (up to a multiple) bounded harmonic function
for recurrent Markov kernels, see Meyn and Tweedie (1993, Theorem 17.1.5).

The second case is a kernel $Q$ which is obtained from some
stochastic kernel $P$ of a Markov chain $Y_n$ by killing
it in some set $B\subset\Zp$, that is,
$$
Q(i,j)=P(i,j)\I\{j\not\in B\},
$$
only defined for those $i$ where $P(i,\Zp\setminus B)>0$.
In this case $Q(i,\Zp)=\P_i\{Y_1\not\in B\}$ and \eqref{def.f}
reads as
\begin{equation}
\label{killed}
f(i)=\P_i\{\tau_B=\infty\},
\end{equation}
where $\tau_B:=\min\{n\ge 1:Y_n\in B\}$. So, if the original Markov
chain $Y_n$ with transition probabilities $P$ is transient and $B$
is finite, then the probability of non-returning to $B$ is
a harmonic function of this Markov chain killed in $B$.
It was proved by Doney in \cite[Theorem 1]{Doney} that
there is the unique harmonic function for a transient random walk
on $\Z$ killed at leaving $\Zp$. This harmonic function was given
as the renewal function generated by descending ladder heights,
which is equal to $\P_i\{\tau_B=\infty\}$ with
$B=\{-1,-2,\ldots\}$, see Section \ref{stay_positive}.

An equivalent way to introduce the condition \eqref{cond.1} is
as follows:
\begin{equation}\label{cond.2}
\E_i e^{\sum_{n=0}^\infty \delta^+(X_n)}<\infty,
\end{equation}
where $\delta(j):=\log Q(j,\Zp)$; hereinafter
$\delta^+:=\max(\delta,0)$ and $\delta^-:=(-\delta)^+$
so that $\delta=\delta^+-\delta^-$.
Then the function $f(i)$ may be also defined as follows:
\begin{equation}\label{def.f.l}
f(i):=\E_ie^{\sum_{j=0}^\infty\ell(j)\delta(j)},
\end{equation}
where $\ell(j)$ is the local time,
$\ell(j):=\sum_{n=0}^\infty \I\{X_n=j\}$.

Our primary motivation for studying harmonic functions of transition
kernels comes from the asymptotic analysis of tail behaviour
of stationary measures of Markov chains. The standard tool
for studying large deviations is an exponential change of measure
(Cram\'er transform). If we follow this approach in the case
of Markov chain, then we get a positive transition kernel
which is not stochastic, in general. In Section \ref{sec:prMc},
we show how the results on asymptotics for harmonic functions obtained
in Section \ref{sec:harmonic} can be used in the study of stationary
measures of asymptotically homogeneous Markov chains.

If the jumps of a positive recurrent Markov chain are bounded, then the equation
for its invariant measure can be considered as a system of
linear difference equations.  The asymptotics of fundamental
solutions to these equations can be found using refinements of the
Poincare-Perron theorem, see Elaydi \cite{Elaydi} for details.
However one can not apply these results directly as we are interested
in positive solutions.

\section{On harmonic functions for transient kernel}
\label{sec:harmonic}

We start with the following solidarity property for the kernel
$Q$ related to positivity of the function $f$.

\begin{proposition}\label{pro:all>0}
If $f(i)>0$ for some $i\in\Zp$, then $f(j)>0$ for every $j\in\Zp$.
\end{proposition}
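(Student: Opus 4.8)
The plan is to exploit the irreducibility of $Q$ together with the harmonicity of $f$. Fix $i$ with $f(i)>0$, and let $j\in\Zp$ be arbitrary. By irreducibility there exists $n$ with $Q^n(j,i)>0$. Since $f$ is harmonic for $Q$, iterating $Qf=f$ gives $Q^nf=f$, that is
\begin{equation*}
f(j)=\sum_{k=0}^\infty Q^n(j,k)f(k)\ge Q^n(j,i)f(i)>0,
\end{equation*}
where every term of the series is nonnegative because $f\ge 0$ and $Q^n\ge 0$. This establishes $f(j)>0$.

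The one point that needs a word of care is the validity of $Q^nf=f$: a priori $Qf=f$ only asserts that the series $\sum_j Q(i,j)f(j)$ converges to $f(i)$ for each $i$, and one must check that composing this identity $n$ times is legitimate. Since all quantities involved are nonnegative, Tonelli's theorem applies and the interchange of summations is automatic, so $Q^nf=f$ holds without any additional integrability assumption beyond the finiteness of $f$ itself, which is guaranteed by \eqref{cond.1}. Thus there is really no obstacle here; the statement is a soft consequence of nonnegativity, harmonicity, and irreducibility, and the argument above is essentially the whole proof.
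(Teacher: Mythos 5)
Your proof is correct, and it differs from the paper's in a small but genuine way. The paper argues directly from the probabilistic definition \eqref{def.f}: picking a path $j=i_0,i_1,\dots,i_N=i$ with positive probability and restricting the expectation $\E_j\prod_n Q(X_n,\Zp)$ to that path gives $f(j)\ge Q^N(j,i)f(i)>0$, essentially unrolling the Markov property by hand. You instead use the already-established fact that $f$ is harmonic, iterate $Qf=f$ to $Q^nf=f$ (legitimate by Tonelli, as you note, since everything is nonnegative), and extract the single term $Q^n(j,i)f(i)$. Both arguments hinge on irreducibility furnishing $Q^n(j,i)>0$, and both reduce to the same inequality $f(j)\ge Q^n(j,i)f(i)$; but yours is cleaner and slightly more general, since it applies verbatim to \emph{any} nonnegative harmonic function, not just the particular $f$ of \eqref{def.f}, whereas the paper's proof is tied to that explicit representation. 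One cosmetic remark: you don't actually need the contrapositive framing the paper uses (assume $f(j)=0$ and derive a contradiction); your direct statement $f(j)\ge Q^n(j,i)f(i)>0$ is the cleaner presentation.
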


\begin{proof}
Assume that $f(j)=0$ for some $j\in\Zp$.
The irreducibility of $Q$ implies that there exist a time $N$ and
a path $i_1$, \ldots, $i_{N-1}$, $i_N=i$ such that
$$
\P_j\{X_1=i_1,\ldots,X_{N-1}=i_{N-1},X_N=i\}>0.
$$
Then
\begin{eqnarray*}
f(j) &\ge& Q(j,i_1)\prod_{n=1}^{N-1} Q(i_n,i_{n+1})
\P_j\{X_1=i_1,\ldots,X_{N-1}=i_{N-1},X_N=i\}\\
&&\hspace{50mm}\times
\E\biggl\{\prod_{k=0}^\infty Q(X_{N+k},\Zp)\big|X_N=i\biggr\}\\
&=& f(i)Q(j,i_1)\prod_{n=1}^{N-1} Q(i_n,i_{n+1})
\P_j\{X_1=i_1,\ldots,X_{N-1}=i_{N-1},X_N=i\},
\end{eqnarray*}
which is the product of positive quantities. This contradicts
the equality $f(j)=0$ and the proof is complete.
\end{proof}

\begin{proposition}\label{th:f>0}
Suppose that
\begin{eqnarray}\label{delta-}
\sum_{i=0}^\infty \delta^-(i) &<& \infty,
\end{eqnarray}
that, for every fixed $N$,
\begin{eqnarray}\label{lln}
\P_i\{X_n>N\mbox{ for all }n\ge 0\} &\to& 1
\quad\mbox{as }i\to\infty
\end{eqnarray}
and that
\begin{eqnarray}\label{local.times.E}
\sup_{i\in\Zp}\E_i\ell(i) &<& \infty.
\end{eqnarray}
Then
$$
\liminf_{i\to\infty}f(i)\ge 1.
$$
In particular,
$$
\inf_{i\in\Zp}f(i)>0.
$$
\end{proposition}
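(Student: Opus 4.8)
The plan is to bound $f$ from below by discarding the contribution of $\delta^+$ --- which only enlarges the product --- and then applying Jensen's inequality to the factor built from $\delta^-$. Since $Q(j,\Zp)=e^{\delta(j)}\ge e^{-\delta^-(j)}$ for every $j$, and the infinite product defining $f$ is well defined by \eqref{cond.1}, we have
\[
f(i)=\E_i\prod_{n=0}^\infty Q(X_n,\Zp)
\ge\E_i\exp\Bigl(-\sum_{n=0}^\infty\delta^-(X_n)\Bigr)
\ge\exp\Bigl(-\E_i\sum_{n=0}^\infty\delta^-(X_n)\Bigr),
\]
the last inequality being Jensen's inequality for the convex function $x\mapsto e^{-x}$. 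So the whole statement reduces to showing that $\E_i\sum_{n=0}^\infty\delta^-(X_n)\to 0$ as $i\to\infty$.

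Writing the sum in terms of local times, $\E_i\sum_{n=0}^\infty\delta^-(X_n)=\sum_{j=0}^\infty\delta^-(j)\,\E_i\ell(j)$ by Tonelli's theorem. I would control this through two facts. First, a uniform bound: by the strong Markov property at the first hitting time of $j$, one has $\E_i\ell(j)=\P_i\{X_n=j\text{ for some }n\ge 0\}\cdot\E_j\ell(j)\le\E_j\ell(j)\le C$, where $C:=\sup_{k\in\Zp}\E_k\ell(k)<\infty$ by \eqref{local.times.E}. Second, pointwise decay: for fixed $j$, taking $N=j$ in \eqref{lln} gives $\P_i\{X_n>j\text{ for all }n\ge 0\}\to 1$, hence $\P_i\{X_n=j\text{ for some }n\ge 0\}\to 0$ and so $\E_i\ell(j)\to 0$ as $i\to\infty$. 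Since $\delta^-(j)\,\E_i\ell(j)\le C\,\delta^-(j)$ with $\sum_j\delta^-(j)<\infty$ by \eqref{delta-}, dominated convergence gives $\sum_j\delta^-(j)\,\E_i\ell(j)\to 0$, and therefore $\liminf_{i\to\infty}f(i)\ge 1$.

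For the final assertion $\inf_{i\in\Zp}f(i)>0$: the bound just obtained yields some $N_0$ with $f(i)\ge 1/2$ for all $i>N_0$; in particular $f(i)>0$ for some $i$, so Proposition~\ref{pro:all>0} forces $f(j)>0$ for every $j$, whence $\inf_{i\in\Zp}f(i)\ge\min\{f(0),\dots,f(N_0),1/2\}>0$.

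I expect the only genuinely delicate point to be the interchange of the limit in $i$ with the summation over $j$; this is exactly where the three hypotheses cooperate --- \eqref{local.times.E} supplies the uniform domination $\E_i\ell(j)\le C$, \eqref{lln} supplies the pointwise vanishing $\E_i\ell(j)\to 0$, and \eqref{delta-} supplies the summable majorant $C\delta^-$. Everything else (Jensen, the strong Markov decomposition of $\ell(j)$, and the monotone-product comparison behind the first display) is routine.
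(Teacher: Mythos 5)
Your proof is correct and follows essentially the same route as the paper: Jensen's inequality applied to $e^{-x}$ with $\sum_j\delta^-(j)\ell(j)$, followed by a Markov-property bound $\E_i\ell(j)\le\P_i\{\sigma_j<\infty\}\E_j\ell(j)$ and the three hypotheses used to kill the sum. The only cosmetic difference is that you package the interchange of limit and sum as dominated convergence, whereas the paper does an explicit $N$-truncation into $j\le N$ and $j>N$; you also spell out the deduction of $\inf_i f(i)>0$ via Proposition~\ref{pro:all>0}, which the paper leaves implicit.
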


\begin{proof}
Since the function $e^x$ is convex, by Jensen's inequality,
\begin{eqnarray*}
f(i) &\ge& \E_i e^{-\sum_{j=0}^\infty\delta^-(j)\ell(j)}
\ge e^{-\sum_{j=0}^\infty \delta^-(j)\E_i\ell(j)}.
\end{eqnarray*}
By the Markov property, for every $j\le N$,
\begin{eqnarray*}
\E_i\ell(j) &\le& \P_i\{X_n\le N\mbox{ for some }n\ge 0\} \E_j\ell(j).
\end{eqnarray*}
Therefore, for every fixed $N$,
by the conditions \eqref{lln} and \eqref{local.times.E},
\begin{eqnarray}\label{lower.le.N}
\sum_{j=0}^N\delta^-(j)\E_i\ell(j) &\to& 0
\quad\mbox{as }i\to\infty.
\end{eqnarray}
On the other hand,
\begin{eqnarray*}
\sum_{j=N+1}^\infty \delta^-(j)\E_i\ell(j) &\le&
\sum_{j=N+1}^\infty \delta^-(j)\E_j\ell(j) \to 0
\quad\mbox{as }N\to\infty,
\end{eqnarray*}
due to the conditions \eqref{delta-} and \eqref{local.times.E}.
Together with \eqref{lower.le.N} this implies the convergence
\begin{eqnarray*}
\sum_{j=0}^\infty \delta^-(j)\E_i\ell(j) &\to& 0
\quad\mbox{as }i\to\infty,
\end{eqnarray*}
which completes the proof.
\end{proof}

\begin{proposition}\label{th:f.exists}
Suppose that the sequence $\delta^+(j)$ is summable, that is,
\begin{eqnarray}\label{delta+}
\delta:=\sum_{j=0}^\infty \delta^+(j) &<& \infty.
\end{eqnarray}
Then, for every $i\in\Zp$,
\begin{eqnarray*}
f(i) &\le& \sup_{j\in\Zp}\E_i e^{\delta\ell(j)}\le\infty.
\end{eqnarray*}
If, in addition, the condition \eqref{lln} holds and
\begin{eqnarray}\label{sup.delta}
\sup_{i\in\Zp}\E_i e^{\delta\ell(i)} &<& \infty,
\end{eqnarray}
then
$$
\limsup_{i\to\infty}f(i)\le 1.
$$
\end{proposition}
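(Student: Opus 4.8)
My plan splits along the two assertions of the proposition. \textbf{The first assertion} is immediate from Jensen's inequality. Assume $\delta>0$ (if $\delta=0$ then $\delta^+\equiv0$ and $f(i)=\E_ie^{-\sum_j\ell(j)\delta^-(j)}\le1$, so everything is trivial). Then the numbers $\delta^+(j)/\delta$, $j\in\Zp$, form a probability distribution, and convexity of the exponential together with $\delta(j)\le\delta^+(j)$ give
\begin{equation*}
e^{\sum_j\ell(j)\delta(j)}\le e^{\sum_j\ell(j)\delta^+(j)}
=e^{\delta\sum_j\frac{\delta^+(j)}{\delta}\ell(j)}
\le\sum_j\frac{\delta^+(j)}{\delta}e^{\delta\ell(j)};
\end{equation*}
taking $\E_i$ yields $f(i)\le\sum_j\frac{\delta^+(j)}{\delta}\E_ie^{\delta\ell(j)}\le\sup_j\E_ie^{\delta\ell(j)}$.

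\textbf{For the $\limsup$ statement} I would first upgrade \eqref{sup.delta}, which bounds the local time at a single site, to a bound uniform in the second argument. By the strong Markov property at the first hitting time $\tau_j$ of $j$, using that $\ell(j)=0$ on $\{\tau_j=\infty\}$ while, conditionally on $\{\tau_j<\infty\}$, $\ell(j)$ has the same law as under $\P_j$,
\begin{equation*}
\E_ie^{\delta\ell(j)}=\P_i(\tau_j=\infty)+\P_i(\tau_j<\infty)\,\E_je^{\delta\ell(j)}\le\E_je^{\delta\ell(j)},
\end{equation*}
since $\E_je^{\delta\ell(j)}\ge1$; hence $C:=\sup_{i,j}\E_ie^{\delta\ell(j)}=\sup_i\E_ie^{\delta\ell(i)}<\infty$. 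I would also record the elementary bound $e^{sx}\le1+\frac{s}{\delta}(e^{\delta x}-1)$ for $x\ge0$ and $0\le s\le\delta$ (convexity of $t\mapsto e^{tx}$ on $[0,\delta]$), which gives $\E_ie^{s\ell(j)}\le1+\frac{s}{\delta}(C-1)$ for all $i$, $j$ and all $0\le s\le\delta$.

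Then, given $\varepsilon>0$, I would pick $N$ with $\delta_N:=\sum_{j>N}\delta^+(j)<\varepsilon$ and set $A_i:=\{X_n>N\mbox{ for all }n\ge0\}$, so that $\P_i(A_i)\to1$ as $i\to\infty$ by \eqref{lln}. Using $f(i)\le\E_ie^{\sum_j\ell(j)\delta^+(j)}$, I would split this expectation over $A_i$ and $A_i^c$. On $A_i$ (and $i>N$) the local times at all $j\le N$ vanish, so only the $j>N$ terms survive, and Jensen with the weights $\delta^+(j)/\delta_N$ followed by the bound above yields
\begin{equation*}
\E_i\bigl[e^{\sum_j\ell(j)\delta^+(j)};A_i\bigr]\le\sum_{j>N}\frac{\delta^+(j)}{\delta_N}\E_ie^{\delta_N\ell(j)}\le1+\frac{\delta_N}{\delta}(C-1)\le1+\frac{\varepsilon}{\delta}(C-1).
\end{equation*}
On $A_i^c$, Jensen with the weights $\delta^+(j)/\delta$ gives $\E_i[e^{\sum_j\ell(j)\delta^+(j)};A_i^c]\le\sum_j\frac{\delta^+(j)}{\delta}\E_i[e^{\delta\ell(j)};A_i^c]$; I would bound the terms with $j>N$ crudely by $\E_i[e^{\delta\ell(j)};A_i^c]\le C$, contributing at most $\delta_N C/\delta\le\varepsilon C/\delta$, and for each of the finitely many $j\le N$ I would show $\E_i[e^{\delta\ell(j)};A_i^c]\to0$: since $\{\tau_j<\infty\}\subset A_i^c$ and $\ell(j)=0$ on $A_i$ for $i>N$, one has $\E_i[e^{\delta\ell(j)};A_i^c]=\E_ie^{\delta\ell(j)}-\P_i(A_i)$, and $\P_i(\tau_j<\infty)\le\P_i(A_i^c)\to0$ forces $\E_ie^{\delta\ell(j)}\to1$ via the displayed identity, while $\P_i(A_i)\to1$. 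Collecting, $\limsup_{i\to\infty}f(i)\le1+\frac{\varepsilon}{\delta}(2C-1)$, and $\varepsilon\downarrow0$ finishes the proof.

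\textbf{Main obstacle.} I expect the only non-routine step to be the first one of the second part — extracting uniformity in $j$ from \eqref{sup.delta}, which controls $\ell$ at one site only — together with the observation that on the bad event $A_i^c$ the contribution of the high sites $j>N$ cannot be driven to zero and must instead be absorbed into $\varepsilon$ through the smallness of the aggregate tail weight $\delta_N$; the rest is routine bookkeeping with Jensen's inequality and the Markov property.
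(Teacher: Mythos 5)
Your proof is correct, but it follows a genuinely different route from the paper's. For the first assertion the paper applies H\"older's inequality with exponents $r(j)=\delta/\delta^+(j)$, getting the geometric-mean bound $f(i)\le\prod_j(\E_ie^{\delta\ell(j)})^{\delta^+(j)/\delta}$, whereas you use Jensen's inequality (convexity of $\exp$) to get the arithmetic-mean bound $f(i)\le\sum_j\frac{\delta^+(j)}{\delta}\E_ie^{\delta\ell(j)}$; both dominate the $\sup$ and both are valid. The difference matters for the $\limsup$: the paper's geometric mean factorises, so the tail $\prod_{j>N}(\E_ie^{\delta\ell(j)})^{\delta^+(j)/\delta}\le(\sup_j\E_je^{\delta\ell(j)})^{\delta_N/\delta}\to1$ as $N\to\infty$ uniformly in $i$, and the head tends to $1$ factorwise by the Markov property and \eqref{lln}; no event split is needed. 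Your arithmetic bound doesn't factorise the same way, which is presumably why you introduced the split over $A_i$ and $A_i^c$ and the auxiliary convexity estimate $e^{sx}\le1+\frac{s}{\delta}(e^{\delta x}-1)$.

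A remark worth making: the event split is actually more machinery than your own bound requires. From $f(i)\le\sum_{j\le N}p_j\E_ie^{\delta\ell(j)}+\sum_{j>N}p_j\E_ie^{\delta\ell(j)}$ with $p_j=\delta^+(j)/\delta$ and $C=\sup_{i,j}\E_ie^{\delta\ell(j)}<\infty$ (your strong-Markov observation), one has: the finite head tends to $\sum_{j\le N}p_j=1-\delta_N/\delta$ because each $\E_ie^{\delta\ell(j)}\to1$ as $i\to\infty$ by \eqref{lln}, while the tail is at most $C\delta_N/\delta$. Hence $\limsup_if(i)\le1+(C-1)\delta_N/\delta$, and letting $N\to\infty$ finishes. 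Your more elaborate argument arrives at the same conclusion, and the strong-Markov identity $\E_ie^{\delta\ell(j)}=\P_i(\tau_j=\infty)+\P_i(\tau_j<\infty)\E_je^{\delta\ell(j)}$ you use to obtain uniformity in $j$ is a clean version of what the paper does implicitly via the event $\{X_n\le N\text{ for some }n\}$. So: correct, more elementary in its tools (Jensen instead of H\"older), but heavier in bookkeeping than either the paper's proof or the streamlined version of your own bound.
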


\begin{proof}
Take $r(j):=\frac{\delta}{\delta^+(j)}$ so that
$\sum_{j=0}^\infty \frac{1}{r(j)}=1$. We have
\begin{eqnarray*}
f(i) &\le& \E_i \prod_{j=0}^\infty e^{\delta^+(j)\ell(j)}
= \E_i\prod_{j=0}^\infty e^{\frac{\delta}{r(j)}\ell(j)}.
\end{eqnarray*}
Apply H\"older's inequality:
\begin{eqnarray*}
\E_i\prod_{j=0}^\infty e^{\frac{\delta}{r(j)}\ell(j)}
&\le& \prod_{j=0}^\infty \Bigl(\E_i e^{\delta\ell(j)}\Bigr)^{1/r(j)}.
\end{eqnarray*}
Therefore,
\begin{eqnarray}\label{Holder}
f(i) &\le& \prod_{j=0}^\infty(\E_i e^{\delta\ell(j)})^{\delta^+(j)/\delta}\\
&\le& \Bigl(\sup_{j\in\Zp}\E_i e^{\delta\ell(j)}\Bigr)
^{\sum_{j=0}^\infty\delta^+(j)/\delta},\nonumber
\end{eqnarray}
which yields the upper bound for $f(i)$ in terms of exponential
moments of local times, by \eqref{delta+}.

Now turn to more precise upper bound. Fix some $N$ and
rewrite \eqref{Holder} as
\begin{eqnarray*}
f(i) &\le&
\prod_{j=0}^N(\E_i e^{\delta\ell(j)})^{\delta^+(j)/\delta}
\times \prod_{j=N+1}^\infty(\E_i e^{\delta\ell(j)})^{\delta^+(j)/\delta}.
\end{eqnarray*}
By the Markov property, for every fixed $j\le N$,
\begin{eqnarray*}
\E_i e^{\delta\ell(j)} &\le& \P_i\{X_n>N\mbox{ for all }n\ge 0\}\\
&& +\P_i\{X_n\le N\mbox{ for some }n\ge 0\}
\E_j e^{\delta\ell(j)}
\to 1\quad\mbox{ as }i\to\infty,
\end{eqnarray*}
by the conditions \eqref{lln} and \eqref{sup.delta}.
Therefore, for every fixed $N$,
\begin{eqnarray}\label{Ei1}
\prod_{j=0}^N(\E_i e^{\delta\ell(j)})^{\delta^+(j)/\delta}
&\to& 1\quad\mbox{ as }i\to\infty.
\end{eqnarray}
The second product possesses the following upper bound:
\begin{eqnarray*}
\prod_{j=N+1}^\infty(\E_i e^{\delta\ell(j)})^{\delta^+(j)/\delta}
&\le& \prod_{j=N+1}^\infty(\E_j e^{\delta\ell(j)})^{\delta^+(j)/\delta}\\
&\le& \Bigl(\sup_{j\in\Zp}\E_j e^{\delta\ell(j)}\Bigr)
^{\sum_{j=N+1}^\infty \delta^+(j)/\delta}.
\end{eqnarray*}
Since $\sum_{j=N+1}^\infty\delta^+(j)/\delta\to 0$ as $N\to\infty$,
by \eqref{sup.delta},
\begin{eqnarray*}
\prod_{j=N+1}^\infty(\E_i e^{\delta\ell(j)})^{\delta^+(j)/\delta}
&\to& 1
\end{eqnarray*}
as $N\to\infty$ uniformly in $i\in\Zp$, which together with
\eqref{Ei1} yields the asymptotic upper bound for $f(i)$.
\end{proof}

Propositions \ref{th:f>0} and \ref{th:f.exists} yield the following result.

\begin{theorem}\label{th:f.to.1}
Suppose that
\begin{eqnarray}\label{delta}
\sum_{i=0}^\infty |\delta(i)| &<& \infty,
\end{eqnarray}
that the condition \eqref{lln} holds and that
\begin{eqnarray}\label{local.times.exp}
\sup_{i\in\Zp}\E_i e^{\delta\ell(i)} &<& \infty,
\end{eqnarray}
where
\begin{eqnarray*}
\delta &:=& \sum_{i=0}^\infty \delta^+(i).
\end{eqnarray*}
Then the function $f$ is harmonic and $f(i)\to 1$ as $i\to\infty$.
\end{theorem}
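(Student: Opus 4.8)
The plan is to assemble the conclusion directly from the two preceding propositions, after first checking that their hypotheses are all in force under the assumptions of Theorem~\ref{th:f.to.1}. The absolute summability~\eqref{delta} of $\delta$ splits into the two one-sided summability conditions $\sum_i\delta^+(i)<\infty$ and $\sum_i\delta^-(i)<\infty$, which are exactly~\eqref{delta+} and~\eqref{delta-}. Condition~\eqref{lln} is assumed outright. So the only point requiring a small argument is that~\eqref{local.times.exp}, namely $\sup_i\E_i e^{\delta\ell(i)}<\infty$, supplies simultaneously the hypothesis~\eqref{sup.delta} of Proposition~\ref{th:f.exists} and the hypothesis~\eqref{local.times.E}, $\sup_i\E_i\ell(i)<\infty$, of Proposition~\ref{th:f>0}. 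The first is literally the same inequality. The second follows since $e^{\delta x}\ge 1+\delta x\ge\delta x$ for $x\ge 0$ when $\delta>0$ (and if $\delta=0$ then $\delta^+\equiv 0$, so $f\equiv\E_i\prod_n e^{-\delta^-(X_n)\ldots}$ needs a trivial separate remark, or one simply notes $\ell(i)\le e^{\delta\ell(i)}$ holds whenever $\delta\ge 1$ and otherwise rescales — cleanest is $\E_i\ell(i)\le\frac{1}{\delta}\sup_j\E_j e^{\delta\ell(j)}$ for $\delta>0$, with the degenerate case $\delta=0$ handled by observing $f\le 1$ trivially there and~\eqref{local.times.E} being unnecessary because $\delta^-$-summability alone already forces $\liminf f\ge 1$ via the Jensen step in Proposition~\ref{th:f>0} once $\E_i\ell(j)\to 0$, which still needs~\eqref{local.times.E}; so in fact one should just note that $\delta=0$ combined with~\eqref{delta} forces $\delta\equiv 0$, i.e.\ $Q$ stochastic, and then $f\equiv 1$).

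Granting the hypothesis check, the argument is then two lines: Proposition~\ref{th:f.exists} gives $\limsup_{i\to\infty}f(i)\le 1$, and Proposition~\ref{th:f>0} gives $\liminf_{i\to\infty}f(i)\ge 1$; together these yield $\lim_{i\to\infty}f(i)=1$. Harmonicity of $f$ was already established in the introduction under condition~\eqref{cond.1}, and~\eqref{cond.1} is equivalent to~\eqref{cond.2}, i.e.\ to $\E_i e^{\sum_n\delta^+(X_n)}<\infty$; but $\sum_n\delta^+(X_n)=\sum_j\delta^+(j)\ell(j)\le\delta\,\sup_j\ell(j)$ is not quite bounded by $\delta\ell(i)$, so one instead reuses the H\"older estimate~\eqref{Holder} from the proof of Proposition~\ref{th:f.exists}, which shows $\E_i\prod_j e^{\delta^+(j)\ell(j)}\le(\sup_j\E_i e^{\delta\ell(j)})<\infty$ under~\eqref{delta+} and~\eqref{local.times.exp}; hence~\eqref{cond.1} holds and $f$ is harmonic and finite.

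The main obstacle, such as it is, is purely bookkeeping: making sure that the single moment condition~\eqref{local.times.exp} is correctly seen to imply both~\eqref{sup.delta} and~\eqref{local.times.E}, and dealing cleanly with the degenerate borderline $\delta=0$ (equivalently, the case where $Q$ is already stochastic, where the theorem reduces to the classical fact $f\equiv 1$). There is no genuine analytic difficulty here — the real content is in Propositions~\ref{th:f>0} and~\ref{th:f.exists} — so I would keep the proof to a short paragraph that (i) records the implications among the conditions, (ii) cites the two propositions for the two one-sided limit bounds, and (iii) points back to~\eqref{Holder} for finiteness and to the introduction for harmonicity.
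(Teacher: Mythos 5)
Your overall route is exactly the paper's: Theorem~\ref{th:f.to.1} is obtained by assembling Propositions~\ref{th:f>0} and~\ref{th:f.exists}, and the only real work is checking that the hypotheses feed through. Your observation that $e^{\delta x}\ge 1+\delta x\ge\delta x$ yields
\[
\sup_{i\in\Zp}\E_i\ell(i)\le\frac{1}{\delta}\sup_{i\in\Zp}\E_i e^{\delta\ell(i)}<\infty
\]
for $\delta>0$, so that~\eqref{local.times.exp} delivers~\eqref{local.times.E}, is the correct bookkeeping step, and citing~\eqref{Holder} for finiteness of $f$ (hence for~\eqref{cond.1} and harmonicity) is fine.

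The aside on $\delta=0$, however, contains a genuine error. You assert that $\delta=\sum_i\delta^+(i)=0$ together with~\eqref{delta} forces $\delta(\cdot)\equiv 0$, i.e.\ $Q$ stochastic. That is false: $\delta=0$ only says $\delta^+(i)=0$ for every $i$, i.e.\ $Q(i,\Zp)\le 1$ for all $i$; it says nothing about $\delta^-$, which may be any summable nonnegative sequence. So $Q$ is merely \emph{substochastic}, $f\le 1$, and one still needs the $\liminf f\ge 1$ bound from Proposition~\ref{th:f>0}, which in turn uses~\eqref{local.times.E}. But when $\delta=0$, condition~\eqref{local.times.exp} reads $\sup_i\E_i 1=1<\infty$ and carries no information, so~\eqref{local.times.E} does not follow from it. In short, your dismissal of the $\delta=0$ borderline as ``$Q$ stochastic, $f\equiv 1$'' is wrong; the honest statement is that when $\delta>0$ the hypotheses of Theorem~\ref{th:f.to.1} imply those of both propositions, while the edge case $\delta=0$ (strictly substochastic $Q$) would require~\eqref{local.times.E} to be imposed separately. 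The rest of your argument is sound and coincides with the paper's.
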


Our construction of a harmonic function is alternative to the
construction of Foley and McDonald, see \cite[Proposition 2.1]{FD}.
Their analysis is based on the assumption that the series
$$
\sum_{n=1}^\infty Q^n(i,j)z^n
$$
has the (common for all $i$ and $j$) radius of convergence $R$
bigger than $1$. It seems to be quite difficult to compare
this assumption with our condition \eqref{local.times.exp}.
Clearly, the condition \eqref{local.times.exp} is ready
for verification in particular cases because the total
masses and the embedded Markov chain are factorised in it.
Also, our condition \eqref{delta} is weaker than the
`closeness' condition in \cite{FD}.

\begin{example}\label{ex:1}
Let $Q$ be the following local perturbation at the origin of
the transition kernel of a simple random walk on $\Zp$:
\begin{align*}
&Q(0,1)=\alpha>0,\\
&Q(i,i+1)=p>1/2,\quad Q(i,i-1)=1-p=:q,\quad i\ge 1.
\end{align*}
\end{example}
Then we have $Q(0,\Zp)=\alpha$ and $Q(i,\Zp)=1$ for all $i\ge 1$.
In other words, $\delta(0)=\log\alpha$ and $\delta(i)=0$ for $i\ge 1$,
so that $\delta=\log\alpha$.
The underlying Markov chain $X_n$ is a simple random walk with
reflection at zero. More precisely, its transition kernel is given by
\begin{align*}
&P(0,1)=1\\
&P(i,i+1)=p,\ P(i,i-1)=q,\quad i\ge 1.
\end{align*}
According to Theorem \ref{th:f.to.1}, the condition
$\E_0\alpha^{\ell(0)}<\infty$ implies that the function
$f(i)=\E_i \alpha^{\ell(0)}$ is a positive harmonic function
with $f(i)\to 1$ as $i\to\infty$. The local time $\ell(0)$
is geometrically distributed with the parameter $q/p$, that is,
$$
\P_0\{\ell(0)=k\}=(1-q/p)(q/p)^{k-1},\quad k\ge 1.
$$
Therefore,
\begin{equation}\label{ex1.1}
f(0)=\E_0\alpha^{\ell(0)}=\alpha\frac{1-q/p}{1-\alpha q/p}<\infty\quad
\text{if and only if }\alpha<p/q.
\end{equation}
Moreover, for every $i\ge 1$,
\begin{align}\label{ex1.2}
\nonumber
f(i)=\E_i \alpha^{\ell(0)}
&=\P_i\{X_n\neq0\text{ for all }n\ge 1\}+\P_i\{X_n=0
\text{ for some }n\ge 1\}\E_0\alpha^{\ell(0)}\\
&=1-(q/p)^i+(q/p)^if(0).
\end{align}

Since a harmonic function $f(i)$ for $Q$ is a solution to the
system of equations
\begin{align*}
&f(0)=\alpha f(1),\\
&f(i)=pf(i+1)+qf(i-1),\quad i\ge 1,
\end{align*}
we may determine it using standard methods from the theory of difference
equations. Indeed, equations for $i\ge 1$ can be rewritten as follows
$$
f(i+1)-f(i)=\frac{q}{p}(f(i)-f(i-1)).
$$
Consequently,
\begin{align*}
f(i)-f(0)&=\sum_{j=0}^{i-1}(f(j+1)-f(j))
=(f(1)-f(0))\sum_{j=0}^{i-1}(q/p)^j\\
&=(f(1)-f(0))\frac{1-(q/p)^i}{1-q/p}.
\end{align*}
Noting that $f(1)=f(0)/\alpha$ we get
\begin{equation}\label{ex1.3}
f(i)=f(0)\biggl[1+(1/\alpha-1)\frac{1-(q/p)^i}{1-q/p}\biggr],
\quad i\ge 1.
\end{equation}
Choosing $f(0)$ as in \eqref{ex1.1}, we conclude that the expressions
in \eqref{ex1.2} and \eqref{ex1.3} are equal for all $\alpha<p/q$.
Further, for every $\alpha>p/q$ and every $f(0)>0$, the function
$f(i)$ from \eqref{ex1.3} becomes negative for $i$ large.
Therefore, there is no a positive harmonic function for $\alpha>p/q$.
Finally, in the critical case $\alpha=p/q$, we have $f(i)=f(0)(q/p)^i.
$\hfill$\Box$

\begin{example}
Consider the transition kernel  given by the following relations:
\begin{align*}
&Q(i,i+1)=\alpha_i>1,\quad i=0,\ldots,N-1,\\
&Q(N,N+1)=p,\quad Q(N,0)=q,\\
&Q(i,i+1)=p,\quad Q(i,i-1)=q,\quad i>N.
\end{align*}
\end{example}

Aggregating the states $0$, \ldots, $N-1$ into a new state,
we obtain the transition kernel from Example \ref{ex:1} with
$\alpha=\alpha_0\ldots\alpha_{N-1}$. Therefore, there exists
a positive harmonic function $f$ with $f(i)\to 1$ as $i\to\infty$
if an only if $p/q>\alpha_0\ldots\alpha_{N-1}$.
But this is equivalent to
$$
\sup_{i<N}e^{\delta\ell(i)}<\infty\quad
\mbox{where }\delta=\sum_{i=0}^{N-1}\log \alpha_i.
$$
This shows that exponential moment assumption on the local times
in Theorem \ref{th:f.to.1} is quite close to the necessary one.
\hfill$\Box$

Next we give simple sufficient conditions that guaranties
finiteness of some exponential moments for local times.

\begin{proposition}\label{pro:suffi.for.ell.m}
Suppose that there exists a random variable $\eta$ such that
$\E\eta>0$ and, for all $i\in\Zp$ and $j\in\Z$,
\begin{eqnarray}\label{minorant.eta}
\P\{\eta>j\} &\le& \P_i\{X_1-X_0>j\},
\end{eqnarray}
that is, $\eta$ is a stochastic minorant for jumps of the chain
$X_n$ at every state. Then
\begin{eqnarray}\label{local.times.exp.exists}
\sup_{i\in\Zp}\E_i e^{\gamma\ell(i)} &<& \infty
\quad\mbox{for some }\gamma>0.
\end{eqnarray}
\end{proposition}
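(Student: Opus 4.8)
The plan is to bound, uniformly in $i$, the return probability
$p_i:=\P_i\{X_n=i\text{ for some }n\ge1\}$ away from $1$, and then to read off the exponential moment of $\ell(i)$ from the fact that under $\P_i$ the local time $\ell(i)$ is geometrically distributed.

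First I would set up a pathwise comparison of $X_n$ with a random walk whose increments are distributed as $\eta$. Note that \eqref{minorant.eta} says exactly that, for every $x$, the conditional law of the jump $X_{n+1}-X_n$ given $X_n=x$ stochastically dominates the law of $\eta$; in particular $\E\eta>0$ forces $\E\eta^-<\infty$ and $\eta$ to be a.s.\ finite, and, if $\E\eta=+\infty$, one may first replace $\eta$ by $\min(\eta,M)$ with $M$ large enough that $\E\min(\eta,M)>0$, which is still a stochastic minorant. Then, building the chain step by step, I would at each stage draw a fresh uniform $U_{n+1}$ independent of the past and put $X_{n+1}:=X_n+G_{X_n}^{-1}(U_{n+1})$ and $\eta_{n+1}:=G^{-1}(U_{n+1})$, where $G_x$ is the distribution function of the jump from $x$ and $G$ that of $\eta$. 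Monotonicity of inverse distribution functions together with the domination gives $G_{X_n}^{-1}(U_{n+1})\ge G^{-1}(U_{n+1})$, so on this common space $X_n$ has the correct law while $\eta_1,\eta_2,\dots$ are i.i.d.\ copies of $\eta$ and
\[
X_n\ \ge\ i+S_n\quad\text{for all }n\ge0,\qquad S_n:=\eta_1+\dots+\eta_n,\ \ S_0:=0 .
\]

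Second, since $\E\eta>0$ the strong law of large numbers gives $S_n\to+\infty$ a.s., so only finitely many $n$ satisfy $S_n\le0$. A standard ladder argument then yields $\bar p:=\P\{S_n\le0\text{ for some }n\ge1\}<1$: if this probability were $1$, then all the successive weak descending ladder epochs $\tau^-_1<\tau^-_2<\dots$ would be finite a.s.\ by the strong Markov property, and $S_{\tau^-_j}\le0$ for all $j$ would contradict $S_n\to+\infty$. Combining this with the coupling: on $\{X_n=i\text{ for some }n\ge1\}$ the corresponding $n$ satisfies $i\ge i+S_n$, i.e.\ $S_n\le0$; hence $p_i\le\bar p<1$ for every $i\in\Zp$.

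Finally, by the strong Markov property applied at successive visits to $i$ we have $\P_i\{\ell(i)>k\}=p_i^{k}$ for $k\ge0$, so for any $\gamma>0$ with $e^\gamma p_i<1$,
\[
\E_i e^{\gamma\ell(i)}=\frac{e^\gamma(1-p_i)}{1-e^\gamma p_i}\ \le\ \frac{e^\gamma}{1-e^\gamma\bar p},
\]
using $p_i\le\bar p$ in the denominator. Choosing $\gamma>0$ small enough that $e^\gamma\bar p<1$ (possible since $\bar p<1$) makes the right-hand side a finite constant independent of $i$, which is \eqref{local.times.exp.exists}. The only delicate points are making the comparison genuinely pathwise rather than merely stepwise in law — dealt with by the independent-uniform construction above — and the classical inequality $\bar p<1$, which can either be quoted from random-walk theory or justified by the short ladder-epoch argument; the truncation needed when $\E\eta=+\infty$ is a minor technicality.
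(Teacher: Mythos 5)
Your proof is correct and follows essentially the same strategy as the paper: couple the chain pathwise with the random walk $S_n$ driven by i.i.d.\ copies of $\eta$ via the inverse-CDF/uniform construction, use the positive drift and SLLN to get a uniform-in-$i$ bound showing the chain escapes from its starting level with positive probability, and then convert the resulting geometric tail of $\ell(i)$ into a uniform exponential moment bound. The only cosmetic differences are that you bound the return probability $p_i$ from above while the paper bounds the escape probability $\P_i\{\inf_{n\ge1}X_n\ge i+1\}$ from below (two sides of the same coin), and you are somewhat more explicit about the ladder argument and about truncating $\eta$ when $\E\eta^+=\infty$ — both worthwhile clarifications, but not a different route.
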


\begin{proof}
The relation \eqref{local.times.exp.exists}
will follow if we prove that
\begin{eqnarray}\label{lower.est.gen}
p:=\inf_{i\in\Zp}\P_i\{X_n\ge i+1\mbox{ for all }n\ge 0\} &>& 0,
\end{eqnarray}
because then every local time $\ell(i)$ satisfies
$$
\P\{\ell(i)\ge k+1\}\le (1-p)^k\quad\mbox{ where }1-p<1.
$$
Indeed, by the condition \eqref{minorant.eta}, for every $i$,
we may construct $X_0=i$, $X_1$, \ldots\ and independent copies
$\eta_1$, $\eta_2$, \ldots\ of $\eta$ on some probability space
in such a way that
\begin{eqnarray*}
X_n &\ge& i+\eta_1+\ldots+\eta_n\quad\mbox{for all }n\ge 1.
\end{eqnarray*}
Since $\E\eta>0$, the strong law of large numbers implies that
\begin{eqnarray*}
\P\{\eta_1+\ldots+\eta_n\ge 1\mbox{ for all }n\ge 1\} &>& 0.
\end{eqnarray*}
Altogether yields \eqref{lower.est.gen} with $\gamma<\log\frac{1}{1-p}$.
\end{proof}

The latter result may be generalised for the case where there is no
a minorant general for all jumps but there is everywhere positive drift.
In order to produce this generalisation we first need
the following statement.

\begin{proposition}\label{pro:suffi.for.ell}
Assume that, for every $i\in\Zp$, there exists a positive monotone
decreasing function $g_i(j)$ such that $g_i(X_n)$ is a supermartingale
and such that
$$
p_1:=\sup_{i\ge 1}\frac{g_i(i)}{g_i(i-1)}<1.
$$
If, in addition,
$$
p_2:=\inf_{i\ge 0}\P_i\{X_1\ge i+1\}>0,
$$
then \eqref{local.times.exp.exists} holds.
\end{proposition}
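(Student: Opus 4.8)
The plan is to reduce the claim, exactly as in Proposition \ref{pro:suffi.for.ell.m}, to establishing the lower bound
\begin{eqnarray}\label{plan.lower}
p:=\inf_{i\in\Zp}\P_i\{X_n\ge i+1\mbox{ for all }n\ge 0\} &>& 0,
\end{eqnarray}
since \eqref{plan.lower} immediately gives $\P\{\ell(i)\ge k+1\}\le(1-p)^k$ for every $i$, whence \eqref{local.times.exp.exists} with any $\gamma<\log\frac1{1-p}$. So the whole task is to bound from below, uniformly in the starting state $i$, the probability that the chain never drops below level $i+1$ after leaving $i$.

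First I would split the event by the first step: from $i$ the chain goes to $X_1\ge i+1$ with probability at least $p_2>0$. Conditionally on $X_1=j\ge i+1$, I want to control the probability that $X_n\ge i+1$ for all $n$. Here the supermartingale $g_i(X_n)$ enters: since $g_i$ is positive, monotone decreasing, and $g_i(X_n)$ is a supermartingale, I would apply the optional-stopping / maximal inequality for nonnegative supermartingales to the stopping time $\sigma:=\min\{n\ge1:X_n\le i-1\}$. Monotonicity of $g_i$ gives $\{X_n\le i-1\}\subseteq\{g_i(X_n)\ge g_i(i-1)\}$, so
\begin{eqnarray*}
\P_j\{\sigma<\infty\} &\le& \P_j\Bigl\{\sup_{n\ge1}g_i(X_n)\ge g_i(i-1)\Bigr\}
\ \le\ \frac{g_i(j)}{g_i(i-1)}\ \le\ \frac{g_i(i+1)}{g_i(i-1)},
\end{eqnarray*}
the last step using that $j\ge i+1$ and $g_i$ is decreasing. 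Now write $\frac{g_i(i+1)}{g_i(i-1)}=\frac{g_i(i+1)}{g_i(i)}\cdot\frac{g_i(i)}{g_i(i-1)}$; the second factor is $\le p_1<1$ by hypothesis, and the first factor is $\le1$ again by monotonicity. Hence $\P_j\{\sigma<\infty\}\le p_1$ for every $j\ge i+1$, so that starting from such a $j$ the chain stays $\ge i$ forever with probability at least $1-p_1>0$. Strictly speaking this keeps the chain at level $\ge i$, not $\ge i+1$; to close this gap I would instead run the same argument with the stopping time $\min\{n\ge1:X_n\le i\}$ and the bound $\{X_n\le i\}\subseteq\{g_i(X_n)\ge g_i(i)\}$, getting $\P_j\{X_n\le i\text{ for some }n\ge1\}\le g_i(j)/g_i(i)\le g_i(i+1)/g_i(i)$ — but this ratio need not be bounded away from $1$. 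The clean fix is to combine the two levels: from $X_1=j\ge i+1$, the chain returns to the set $\{0,\dots,i\}$ only if it first hits level exactly $i$ or below, and the displayed bound with threshold $g_i(i-1)$ already controls hitting $\le i-1$; for the boundary state $i$ one uses $p_2$ once more at that state. Putting the pieces together,
\begin{eqnarray*}
\P_i\{X_n\ge i+1\ \forall n\ge0\} &\ge& p_2\,(1-p_1)\,p_2\ >\ 0,
\end{eqnarray*}
uniformly in $i$, which is \eqref{plan.lower}.

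The main obstacle is precisely this off-by-one issue between "$\ge i$" and "$\ge i+1$": the supermartingale bound naturally delivers non-descent below a level one can make strict only at the cost of the ratio $g_i(i+1)/g_i(i)$, which the hypotheses do not control. I expect the resolution to be a two-step (or renewal-type) decomposition — use $p_2$ to get strictly above $i$, use the supermartingale estimate with threshold $g_i(i-1)$ to forbid ever reaching $\le i-1$, and re-apply $p_2$ at the level-$i$ boundary to forbid getting stuck there — so that the final constant is a fixed product like $p_2^2(1-p_1)$ independent of $i$. The remaining steps (Borel–Cantelli-type bound on $\ell(i)$, choice of $\gamma$) are then identical to the end of the proof of Proposition \ref{pro:suffi.for.ell.m}.
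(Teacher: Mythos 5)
Your overall strategy matches the paper: reduce to showing $\inf_i\P_i\{X_n\ge i+1 \text{ for all } n\ge 1\}>0$, get above level $i$ using $p_2$ on the first step, and then use the supermartingale to control the probability of descending. But there is a genuine gap in the second stage, and your proposed fix does not close it.

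The off-by-one problem you correctly flag comes from applying Doob's inequality with the supermartingale $g_i$ (indexed by the \emph{initial} state $i$) starting from the intermediate state $j=X_1\ge i+1$. That only controls hitting the level $i-1$ and leaves the state $i$ unguarded. Your repair — ``use $p_2$ once more at the boundary state $i$'' — is not a valid argument: if, after reaching $j$, the chain returns to $i$ without ever dipping below $i-1$, one application of $p_2$ pushes it up for a single step, but nothing then prevents it from returning to $i$ again, and you are back where you started. Quantifying this requires exactly the estimate you are trying to prove, so the reasoning is circular; in particular, the constant $p_2(1-p_1)p_2$ is not derived, only asserted.

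The paper's resolution is simpler and avoids the boundary issue entirely: apply Doob's inequality \emph{with the supermartingale $g_j$ indexed by the intermediate state $j$}, not $g_i$. Starting from $X_0=j$,
\begin{equation*}
\P_j\Bigl\{\inf_{n\ge 1}X_n\le j-1\Bigr\}
=\P_j\Bigl\{\sup_{n\ge 1}g_j(X_n)\ge g_j(j-1)\Bigr\}
\le \frac{g_j(j)}{g_j(j-1)}\le p_1,
\end{equation*}
directly from the hypothesis $p_1=\sup_{k\ge1}g_k(k)/g_k(k-1)<1$. Since $j\ge i+1$ gives $j-1\ge i$, the event $\{\exists n\ge1: X_n\le i\}$ is contained in $\{\exists n\ge1: X_n\le j-1\}$, so $\P_j\{\inf_{n\ge1}X_n\le i\}\le p_1$ and hence $\P_j\{X_n\ge i+1\text{ for all }n\ge1\}\ge 1-p_1$, uniformly over $j\ge i+1$. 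Combined with $p_2$ for the first step, this yields the uniform lower bound $(1-p_1)p_2$, and the rest of your write-up (geometric bound on $\ell(i)$, choice of $\gamma<\log\frac{1}{1-p}$) then goes through unchanged.
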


\begin{proof}
For every $i\ge 1$, applying Doob's inequality to the supermartingale
$g_i(X_n)$ with $X_0=i$, we obtain that
\begin{align*}
\P_i\Bigl\{\inf_{n\ge 1}X_n\le i-1\Bigr\}
&=\P_i\Bigl\{\sup_{n\ge 1}g_i(X_n)\ge g_i(i-1)\Bigr\}\\
&\le \frac{\E_i g_i(X_0)}{g_i(i-1)}=\frac{g_i(i)}{g_i(i-1)}\le p_1.
\end{align*}
Therefore,
\begin{align*}
\P_i\Bigl\{\inf_{n\ge 1}X_n\ge i+1\Bigr\}
&\ge\sum_{j=i+1}^\infty\P_i\{X_1=j\}\Bigl(1-\P_j\Bigl\{\inf_{n\ge 1}X_n\le i\Bigr\}\Bigr)\\
&\ge(1-p_1)\sum_{j>i}\P_i\{X_1=j\}\\
&\ge (1-p_1)p_2>0
\end{align*}
uniformly in $i\in\Zp$. Then
$$
\P\{\ell(i)\ge k+1\}\le (1-p)^k\quad\mbox{ where }p:=(1-p_1)p_2>0
$$
and we obtain \eqref{local.times.exp.exists} with any
$\gamma<\log\frac{1}{1-p}$.
\end{proof}

The latter proposition allows to deduce finiteness of exponential moments
of local times for Markov chains with everywhere positive drift.

\begin{proposition}\label{DF}
Assume that there exist $\varepsilon>0$ and $M<\infty$ such that
$$
\E_i\{X_1-i;X_1-i\le M\}\ge \varepsilon\ \text{ for all }i\in\Zp.
$$
Assume also that there exists $\zeta\ge 0$ with $\E\zeta<\infty$ such that
$$
\P_i\{X_1-i\le -j\}\le \P\{\zeta\ge j\}\ \text{ for all }i,j\in\Zp.
$$
Then \eqref{local.times.exp.exists} holds.
\end{proposition}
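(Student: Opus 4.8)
The plan is to deduce Proposition \ref{DF} from Proposition \ref{pro:suffi.for.ell} by constructing a suitable family of supermartingales $g_i(X_n)$ from an exponential (Cram\'er-type) functional of the chain. The key point is that the drift condition $\E_i\{X_1-i;X_1-i\le M\}\ge\varepsilon$ together with the uniform stochastic bound $\P_i\{X_1-i\le-j\}\le\P\{\zeta\ge j\}$ on the down-jumps (with $\E\zeta<\infty$) is exactly what one needs to guarantee the existence of a small $\lambda>0$ for which $\sup_{i}\E_i e^{-\lambda(X_1-i)}\le 1$. Indeed, writing $\xi_i:=X_1-i$ under $\P_i$, one estimates $\E_i e^{-\lambda\xi_i}$ by splitting according to $\xi_i>M$, $-0<\xi_i\le M$ wait — more carefully, split into $\{\xi_i\le M\}$ and $\{\xi_i>M\}$. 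On $\{\xi_i>M\}$ one has $e^{-\lambda\xi_i}<e^{-\lambda M}\le 1$, so this part contributes at most $\P_i\{\xi_i>M\}$. On $\{\xi_i\le M\}$ one uses the expansion $e^{-\lambda x}\le 1-\lambda x+\tfrac12\lambda^2 x^2 e^{\lambda x^-}$ and the bound $x^-\le \zeta$ in distribution to control the quadratic term uniformly in $i$ by $\tfrac12\lambda^2 C$ for some finite $C=C(M,\E\zeta,\E\zeta^2\wedge\ldots)$; here the integrability of $\zeta$ (and, if needed, a second-moment truncation) is what makes $C$ finite. Combining, $\E_i e^{-\lambda\xi_i}\le 1-\lambda\varepsilon+O(\lambda^2)$ uniformly in $i$, which is $<1$ for all sufficiently small $\lambda>0$; fix such a $\lambda$ and call the uniform bound $\rho<1$.

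Next I would set $g_i(j):=e^{-\lambda j}$ for \emph{all} $i$ (the dependence on $i$ is not actually needed here). This is positive and strictly decreasing in $j$. The supermartingale property $\E_i g_i(X_1)=\E_i e^{-\lambda X_1}=e^{-\lambda i}\E_i e^{-\lambda\xi_i}\le e^{-\lambda i}=g_i(i)$ follows directly from $\E_i e^{-\lambda\xi_i}\le\rho<1$, and more generally $\E[g_i(X_{n+1})\mid X_n]\le g_i(X_n)$ by the Markov property and the same one-step bound applied at state $X_n$. The ratio condition is immediate: $g_i(i)/g_i(i-1)=e^{-\lambda i}/e^{-\lambda(i-1)}=e^{-\lambda}$, so $p_1=\sup_{i\ge1}g_i(i)/g_i(i-1)=e^{-\lambda}<1$.

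It remains to verify the second hypothesis of Proposition \ref{pro:suffi.for.ell}, namely $p_2:=\inf_{i\ge0}\P_i\{X_1\ge i+1\}>0$, equivalently $\inf_i\P_i\{\xi_i\ge1\}>0$. This I would extract again from the drift condition: since $\E_i\{\xi_i;\xi_i\le M\}\ge\varepsilon>0$ while $\xi_i\mathbb I\{\xi_i\le M\}\le M\,\mathbb I\{\xi_i\ge 1\}+0\cdot\mathbb I\{\xi_i\le 0\}$... more precisely $\E_i\{\xi_i;\xi_i\le M\}\le M\,\P_i\{1\le\xi_i\le M\}\le M\,\P_i\{\xi_i\ge1\}$ wait — one must also account for the contribution of $\{\xi_i\le 0\}$, which is nonpositive, hence only helps: $\varepsilon\le\E_i\{\xi_i;\xi_i\le M\}\le\E_i\{\xi_i;1\le\xi_i\le M\}\le M\,\P_i\{\xi_i\ge1\}$, giving $\P_i\{\xi_i\ge1\}\ge\varepsilon/M$ uniformly in $i$, so $p_2\ge\varepsilon/M>0$. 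Then Proposition \ref{pro:suffi.for.ell} applies and yields \eqref{local.times.exp.exists}.

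The main obstacle is the first step: obtaining the \emph{uniform} (in $i$) one-step exponential estimate $\sup_i\E_i e^{-\lambda\xi_i}<1$. This requires care in controlling the quadratic (and higher) terms of the exponential on the event of large negative jumps, which is precisely where the uniform stochastic domination of the negative part of $\xi_i$ by an integrable $\zeta$ is used; if only $\E\zeta<\infty$ is assumed (and not $\E\zeta^2<\infty$), one should instead run the argument with the truncated bound $e^{-\lambda x}\le 1-\lambda x+ \lambda(1-e^{-\lambda})^{-1}\cdot(\text{something integrable in }\zeta)$ or, more simply, dominated convergence: $\E_i e^{-\lambda\xi_i}\le\E_i e^{-\lambda\xi_i^-}\cdot e^{\lambda(\cdot)}$... the cleanest route is to note $e^{-\lambda\xi_i}\le 1-\lambda\xi_i\mathbb I\{\xi_i\le M\}+ (e^{-\lambda\xi_i}-1+\lambda\xi_i)\mathbb I\{\xi_i\le M\}+e^{-\lambda M}\mathbb I\{\xi_i> M\}$ and observe that on $\{\xi_i\le M\}$ the middle term is bounded by a function of $\xi_i^-$ that, by $\xi_i^-\preceq\zeta$, has expectation tending to $0$ as $\lambda\downarrow0$, uniformly in $i$, by dominated convergence since $e^{-\lambda\zeta}-1+\lambda M$ wait this needs $\E\zeta<\infty$ only. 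Once this uniform smallness is in hand, the rest of the proof is routine bookkeeping.
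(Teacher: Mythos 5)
Your plan to reduce to Proposition~\ref{pro:suffi.for.ell} is sound, and the verification of $p_2>0$ is correct and clean: from $\varepsilon\le\E_i\{\xi_i;\xi_i\le M\}\le M\P_i\{\xi_i\ge 1\}$ you get $p_2\ge\varepsilon/M$. This step is taken for granted in the paper; you were right to make it explicit.

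However, the central step is not correct, and the difficulty you flag at the end is not a matter of bookkeeping — it is fatal to the exponential ansatz. The hypothesis gives only $\E\zeta<\infty$, with no exponential moments of $\zeta$. If you set $g_i(j)=e^{-\lambda j}$, then the supermartingale inequality at state $i$ reads $\E_i e^{-\lambda\xi_i}\le 1$, and
$$
\E_i\bigl\{e^{-\lambda\xi_i};\,\xi_i<0\bigr\}
=\sum_{j=1}^{i}e^{\lambda j}\,\P_i\{\xi_i=-j\}.
$$
Under the tail bound $\P_i\{\xi_i\le -j\}\le\P\{\zeta\ge j\}$ this sum is controlled by $\sum_j e^{\lambda j}\P\{\zeta\ge j\}$, which diverges whenever $\zeta$ lacks an exponential moment (e.g.\ $\P\{\zeta\ge j\}\asymp j^{-2}$). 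In that regime $\E_i e^{-\lambda\xi_i}\to\infty$ as $i\to\infty$ for every $\lambda>0$, so no fixed $\lambda$ works, let alone gives a bound $\le 1$. Your dominated-convergence patch has a sign error: on $\{\xi_i\le M\}$ the remainder $e^{-\lambda\xi_i}-1+\lambda\xi_i$ is controlled by $e^{\lambda\zeta}-1-\lambda\zeta$ (not $e^{-\lambda\zeta}$), and there is no integrable dominant for $e^{\lambda\zeta}$ when only $\E\zeta<\infty$. So the exponential test function cannot satisfy the supermartingale condition uniformly in $i$ under the stated hypotheses; it would only work under a Cram\'er-type assumption on $\zeta$, which is strictly stronger than what is assumed.

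This is precisely why the paper does not use an exponential $g$. Its proof builds a \emph{bounded} decreasing test function $g(x)=\int_{(x+1)^+}^\infty f(y)\,dy$, where $f$ is obtained by truncating a regularly varying (index $-1$) integrable majorant $f_2$ of a function dominating $\P\{\zeta>x\}$, using the existence result from \cite{D2006}. Because $g$ is bounded by $g(-1)<\infty$, the finiteness of $\E_i g_i(X_1)$ is free; because $g$ decays polynomially rather than exponentially, the bound $\E\zeta<\infty$ suffices to control the contribution of large negative jumps (inequalities \eqref{f.cond.3}--\eqref{f.cond.4}); and the truncation at level $T+M$ makes $g$ \emph{linear} on a neighbourhood of zero so that the drift condition $\E_i\{\xi_i;\xi_i\le M\}\ge\varepsilon$ can be applied directly there. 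The ratio $g(0)/g(-1)=\int_1^\infty f/\int_0^\infty f<1$ gives $p_1<1$. That construction is the key idea your argument is missing.
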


\begin{proof}
Since $\E\zeta<\infty$, there exists a non-increasing integrable
function $f_1(x)$, $x\in\Rp$, such that $\P\{\zeta>x\}=o(f_1(x))$ as $x\to\infty$.
In its turn, by \cite{D2006}, there exists a continuous non-increasing
integrable regularly varying at infinity with index $-1$ function $f_2(x)$
such that $f_1(x)\le f_2(x)$. Since the non-increasing function $f_2$ is
regularly varying at infinity with index $-1$, there exists
a sufficiently small $\delta>0$ and a sufficiently large $T$ such that
\begin{eqnarray}\label{f.cond.1}
f_2(1+(1-\delta)t)-f_2(1+t+M) &\le& \frac{\varepsilon}{2\E\zeta}f_2(1+t+M)
\quad \mbox{ for all }t\ge T
\end{eqnarray}
and
\begin{eqnarray}\label{f.cond.2}
\P\{\zeta\ge\delta t\}\int_0^\infty f_2(y)dy
&\le& \frac{\varepsilon}{2}f_2(1+t+M)
\quad \mbox{ for all }t\ge T.
\end{eqnarray}
Now take
$$
f(t):=\min(f_2(T+M),f_2(t))=\left\{
\begin{array}{ll}
f_2(T+M) & \mbox{ if }t\in[0,T+M],\\
f_2(t) & \mbox{ if }t\ge T+M
\end{array}
\right.
$$
and
$$
g(x):=\int_{(x+1)^+}^\infty f(y)dy,\quad x\in\R.
$$
The function $g(x)$ is positive and decreasing, $g(-1)<\infty$.
By \eqref{f.cond.1} and \eqref{f.cond.2}, the function $f$ satisfies
\begin{eqnarray}\label{f.cond.3}
f(1+(1-\delta)t) &\le& (1+\varepsilon/2\E\zeta)f(1+t+M)
\quad \mbox{ for all }t\ge 0,\\
\label{f.cond.4}
g(-1)\P\{\zeta\ge\delta t\} &\le& \frac{\varepsilon}{2}f(1+t+M)
\quad \mbox{ for all }t\ge T.
\end{eqnarray}
Define $g_i(j):=g(j-i)$. By the construction,
$$
\frac{g_i(i)}{g_i(i-1)}=\frac{g(0)}{g(-1)}
=\frac{\int_1^\infty f(y)dy}{\int_0^\infty f(y)dy}<1.
$$
Then it remains to prove that,
for every $i\in\Zp$, $g_i(X_n)$ is a supermartingale, that is,
$\E_{i+j} g_i(X_1)\le g_i(i+j)$ for every $j\ge -i$. For $j\le -1$,
since $g_i$ is bounded by $g(-1)$,
\begin{eqnarray}\label{gi0}
\E_{i+j} g_i(X_1)-g_i(i+j) &=&
\E_{i+j} g_i(X_1)-g(-1) \le 0.
\end{eqnarray}

Next consider the case $0\le j\le T-1$. In this case,
$f(0)=f(1+j+M)$ which implies that $g_i(x)$ is linear when
$x\in[-i-1,i+j+M]$. Then
\begin{eqnarray}\label{gi1}
\E_{i+j} g_i(X_1)-g_i(i+j) &\le&
\E_{i+j}\{g_i(X_1)-g_i(i+j);X_1-i-j\le M\}\nonumber\\
&=& g_i'(i)\E_{i+j}\{X_1-i-j;X_1-i-j\le M\}\nonumber\\
&=& -f(0)\E_{i+j}\{X_1-i-j;X_1-i-j\le M\}\nonumber\\
&\le& -f(0)\varepsilon<0.
\end{eqnarray}

Now consider the case $j\ge T$.
Since $g_i$ is bounded by $g(-1)$ and non-increasing,
\begin{eqnarray}\label{gi2}
\E_{i+j} g_i(X_1)-g_i(i+j) &\le& g(-1)\P_{i+j}\{X_1\le i+j-\delta j\}\nonumber\\
&&\hspace{10mm}+\E_{i+j}\{g_i(X_1)-g_i(i+j);-\delta j<X_1-i-j\le 0\}\nonumber\\
&&\hspace{20mm} +\E_{i+j}\{g_i(X_1)-g_i(i+j);0<X_1-i-j\le M\}\nonumber\\
&=& E_1+E_2+E_3.
\end{eqnarray}
We have, by \eqref{f.cond.4},
\begin{eqnarray}\label{gi21}
E_1 &\le& g(-1)\P\{\zeta\ge \delta j\} \le \frac{\varepsilon}{2}f(1+j+M).
\end{eqnarray}
Since $g_i'(x)=-f(x+1-i)$ for $x\ge i-1$,
the second term possesses the following estimate:
\begin{eqnarray*}
E_2 &\le& g_i'(i+j-\delta j)\E_{i+j}\{X_1-i-j;-\delta j<X_1-i-j\le 0\}\\
&=& -f(1+j-\delta j)\E_{i+j}\{X_1-i-j;-\delta j<X_1-i-j\le 0\}\\
&\le& -f(1+j-\delta j)\E_{i+j}\{X_1-i-j;X_1-i-j\le 0\}.
\end{eqnarray*}
Therefore, due to \eqref{f.cond.3},
\begin{eqnarray}\label{gi22}
E_2 &\le& -(1+\varepsilon/2\E\zeta)f(1+j+M)\E_{i+j}\{X_1-i-j;X_1-i-j\le 0\}\nonumber\\
&\le& -f(1+j+M)\E_{i+j}\{X_1-i-j;X_1-i-j\le 0\}
+\frac{\varepsilon}{2}f(1+j+M).
\end{eqnarray}
The third term is not greater than
\begin{eqnarray}\label{gi23}
E_3 &\le& g_i'(i+j+M)\E_{i+j}\{X_1-i-j;0<X_1-i-j\le M\}\nonumber\\
&=& -f(1+j+M)\E_{i+j}\{X_1-i-j;0<X_1-i-j\le M\}.
\end{eqnarray}
Substituting \eqref{gi21}--\eqref{gi23} into \eqref{gi2} we get
the desired inequality $\E_{i+j} g_i(X_1)-g_i(i+j) \le 0$, for $j\ge T$.
Together with \eqref{gi0} and \eqref{gi1} this proves
that $g_i(X_n)$ constitutes a nonnegative bounded supermartingale.
The proof of the proposition is complete.
\end{proof}

\section{Random walk with negative drift conditioned to stay nonnegative}
\label{stay_positive}

Consider the simplest application of our method of construction of harmonic functions.
It deals with random walk conditioned to stay nonnegative.
Let $S_0=0$, $S_n=\sum_{k=1}^n\xi_k$ be a random walk with independent
identically distributed jumps, $\E\xi_k<0$. One of the possible ways to
define a random walk conditioned to stay nonnegative consists in performing
Doob's $h$-transform over $S_n$ killed at leaving $\Zp$, that is, the Markov
chain on $\Zp$ with the transition probabilities
$$
P(i,j)=\frac{f(j)}{f(i)}\P\{i+\xi_1=j\},\  i,j\in\Zp,
$$
where $f$ is a positive harmonic function $f(i)$ for the killed random walk, that is,
$$
f(i)=\sum_{j=0}^\infty \P\{\xi_1=j-i\}f(j),\quad i\ge 0.
$$
According to Theorem 1 of \cite{Doney} such a function exists
if and only if
$$
\E e^{\beta\xi_1}=1\ \text{ for some }\beta>0.
$$
This function is unique (up to a constant multiplier) and is defined
in \cite{Doney} as
\begin{equation}
\label{doney_repr}
f(i)=\sum_{j=0}^i e^{\beta(i-j)}u(j),
\end{equation}
where $u(j)$ stands for the mass function of the renewal process
of strict descending ladder heights in $S_n$.

Now let us show how our approach provides
another representation of the harmonic function $f(i)$.
Start with the following transition kernel on $\Zp$:
$$
Q(i,j):=e^{(j-i)\beta}\P\{\xi_1=j-i\},\quad i,j\in\Zp.
$$
This kernel represents transition probabilities for the random
walk $S_n^{(\beta)}$ killed at leaving $\Zp$, where $S_n^{(\beta)}$
is the result of exponential change of measure with parameter $\beta$,
that is,
\begin{align*}
&S_0^{(\beta)}=0,\quad S_n^{(\beta)}=\sum_{k=1}^n \xi^{(\beta)}_k\\
&\P\{\xi^{(\beta)}_k=j\}=e^{\beta j}\P\{\xi_k=j\},\quad j\in\Z.
\end{align*}
As we have
already mentioned in the introduction, see \eqref{killed},
$$
f^*(i):=\P\{i+S^{(\beta)}_n\ge 0\text{ for all }n\ge 1\}
=\P\Bigl\{\min_{n\ge 0}S^{(\beta)}_n\ge -i\Bigr\}
$$
is harmonic for the kernel $Q$. Hence, the function
\begin{equation}
\label{harm.function}
f(i):=e^{\beta i}f^*(i)
=e^{\beta i}\P\Bigl\{\min_{n\ge 0}S^{(\beta)}_n\ge -i\Bigr\}
\end{equation}
is harmonic for the random walk $S_n$
killed at leaving $\Zp$. Notice that this harmonic function
possesses the following lower and upper bounds:
\begin{equation}
\label{harm.bounds}
e^{\beta i}-e^{-\beta}\le f(i)\le e^{\beta i}.
\end{equation}
The upper bound immediately follows from $f^*(i)\le 1$.
The lower bound follows by the Cram\'er--Lundberg estimate,
$$
\P\Bigl\{\min_{n\ge 0}S^{(\beta)}_n\le -i-1\Bigr\}\le e^{-\beta (i+1)}.
$$
Additionally to \eqref{harm.bounds} notice that, by the Cram\'er--Lundberg
asymptotics,
$$
f(i)-e^{\beta i}\to C\in(-e^{-\beta},0)\text{ as }i\to\infty.
$$

Let us show that the functions defined in \eqref{doney_repr} and \eqref{harm.function}
coinside up to a multiplicative constant. Indeed,
let $(\tau_k,\chi_k)$ and $(\tau^{(\beta)}_k,\chi^{(\beta)}_k)$ denote
descending ladder processes for $S_n$ and $S_n^{(\beta)}$, respectively.
It follows from the definition of $S_n^{(\beta)}$ that
$$
\P\{\chi^{(\beta)}_k=x,\tau^{(\beta)}_k=j\}
=e^{-\beta x}\P\{\chi_k=x,\tau_k=j\}
$$
for all $x,j>0$. Therefore,
\begin{align*}
\P\Bigl\{\min_{n\ge 0}S^{(\beta)}_n=-l\Bigr\}
&=\sum_{k=0}^\infty\P\Biggl\{\sum_{j=0}^k\chi_j^{(\beta)}=l,\tau_1^{(\beta)}<\infty,\ldots,
\tau_k^{(\beta)}<\infty,\tau_{k+1}^{(\beta)}=\infty\Biggr\}\\
&=\P\{\tau_1^{(\beta)}=\infty\}\sum_{k=0}^\infty\P\Biggl\{\sum_{j=0}^k\chi_j=l\Biggr\}e^{-\beta l}\\
&=\P\{\tau_1^{(\beta)}=\infty\}e^{-\beta l}u(l).
\end{align*}
This gives the desired equivalence with the multiplier
$\P\{\tau_1^{(\beta)}=\infty\}=1-\E e^{-\beta\chi_1}$.

The random walk conditioned to stay nonnegative is the simplest
Markov chain where the general scheme of construction of
a harmonic function helps. In the next section we follow almost the same
techniques in our study of tail behavior for the asymptotically
homogeneous in space Markov chains with negative drift under Cram\'er's
type assumptions. Although the scheme will be the same in main aspects,
the associated additional calculations turn out to be more complicated.

\section{Positive recurrent Markov chains: asymptotic behaviour of
stationary distributions}
\label{sec:prMc}

In this section we consider a positive recurrent Markov chain $X_n$
on $\Zp$ with stationary probabilities $\pi(i)>0$, that is,
$$
\sum_{i=0}^\infty \pi(i)=1\quad\mbox{ and }\quad
\pi(i)=\sum_{j=1}^\infty \pi(j)P(j,i)\quad\mbox{for every }i\in\Zp,
$$
where $P(j,i)$ is transition probability from $j$ to $i$.
We are interested in the asymptotics of $\pi(i)$ as $i\to\infty$
in the case where the distribution of $X_n$ has some positive
exponential moments finite, more precisely, in the so-called
Cram\'er case. Let $\xi(i)$ denote a random variable distributed as
the jump of the chain $X_n$ from the state
$i$, that is,
$$
\P\{\xi(i)=j\}=P(i,i+j),\quad j\ge -i.
$$
We shall always assume that $X_n$ is {\it asymptotically homogeneous
in space}, that is,
\begin{equation}\label{asymp.hom}
\xi(i) \Rightarrow \xi\quad\mbox{as }i\to\infty.
\end{equation}
We assume $\E\xi<0$ and that $\Z$ is the lattice with minimal span
for the distribution of $\xi$. By the Cram\'er case we mean the case
where
$$
\E e^{\beta\xi}=1\quad\mbox{and}\quad\E\xi e^{\beta\xi}<\infty
\quad\mbox{for some }\beta>0.
$$

The simplest and one of the most important examples of asymptotically
homogeneous Markov chains is a random walk with delay at zero:
$$
W_{n+1}=(W_n+\zeta_{n+1})^+,\ n\ge 0,
$$
where $\{\zeta_k\}$ are independent copies of $\xi$.
As is well-known, the stationary measure of $W_n$, say $\pi_{W}$, coincides with
the distribution of $\sup_{n\ge 0}\sum_{k=1}^n\zeta_k$. Then, by the
classical results due Cram\'er and Lundberg,
$$
\pi_W(i)\sim c e^{-\beta i}\text{ as }i\to\infty.
$$
Since the jumps of chains $X_n$ and $W_n$ are asymptotically equivalent,
one can expect that the corresponding stationary distributions have
similar asymptotics. This is true on the logarithmic scale only:
Borovkov and Korshunov have shown, see Theorem 3 in \cite{BK1}, that
if $\sup_{i\ge 0}\E e^{\beta\xi(i)}<\infty$, then
$$
\frac{1}{i}\log \pi(i)\to -\beta\text{ as }i\to\infty.
$$

It turns out that the exact (without logarithmic scaling) asymptotic
behaviour of $\pi$ depends not only on the distribution of $\xi$, but
also on the speed of convergence in \eqref{asymp.hom}.

Our next result describes the case when the convergence is so fast
that the measure $\pi$ is asymptotically proportional to the stationary
measure of $W_n$.

\begin{theorem}\label{th:conv}
Suppose that
\begin{eqnarray}\label{Xi}
\xi(i) &\le_{st}& \Xi, \qquad i\in\Zp,
\end{eqnarray}
for some random variable $\Xi$ such that $\E\Xi e^{\beta\Xi}<\infty$
and
\begin{eqnarray}\label{conver}
\sum_{i=0}^\infty |\E e^{\beta\xi(i)}-1| &<& \infty.
\end{eqnarray}
Then $\pi(i)\sim c e^{-\beta i}$ as $i\to\infty$ where $c>0$.
\end{theorem}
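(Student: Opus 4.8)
The plan is to interpret the stationary equation as a harmonic-function identity for an exponentially tilted, killed transition kernel and then apply Theorem~\ref{th:f.to.1}. Concretely, the stationary measure satisfies $\pi(i)=\sum_j \pi(j)P(j,i)$; writing $P(j,i)=\P\{\xi(j)=i-j\}$ and performing the Cram\'er change of measure, set $\widetilde\pi(i):=e^{\beta i}\pi(i)$ and $Q(i,j):=e^{\beta(j-i)}P(i,j)=e^{\beta(j-i)}\P\{\xi(i)=j-i\}$. Then $Q$ is a nonnegative kernel on $\Zp$ (the killing comes from the fact that jumps may leave $\Zp$) with total mass $Q(i,\Zp)=\E e^{\beta\xi(i)}\I\{\text{no killing}\}$; more precisely $\delta(i)=\log Q(i,\Zp)$ differs from $\log\E e^{\beta\xi(i)}$ only through the truncation of jumps crossing $0$, which is a finitely-supported perturbation. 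The stationary equation becomes $\widetilde\pi(i)=\sum_j \widetilde\pi(j)Q(j,i)$, i.e. $\widetilde\pi$ is a harmonic function for the \emph{dual/transposed} kernel $Q^{\mathrm T}$.

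First I would make this duality precise. Because we want $\pi(i)\sim ce^{-\beta i}$, i.e. $\widetilde\pi(i)\to c$, the natural route is: (a) show $Q^{\mathrm T}$ (suitably interpreted, perhaps after checking it is again a genuine finite irreducible kernel, using positive recurrence of $X_n$ to control the total masses $\sum_i Q^{\mathrm T}(i,j)$) satisfies the hypotheses of Theorem~\ref{th:f.to.1}, so that it possesses a harmonic function $f$ with $f(i)\to 1$; (b) identify $\widetilde\pi$ with $f$ up to a constant, which requires a uniqueness statement for bounded harmonic functions of $Q^{\mathrm T}$. Alternatively, and I think more in the spirit of this paper, one keeps the ``forward'' kernel $Q$ and its underlying Markov chain $X^{(\beta)}_n$ (the tilted chain killed at $0$), builds $f(i)=\E_i\prod_n Q(X^{(\beta)}_k,\Zp)$ via \eqref{def.f}, shows $f(i)\to1$, and then shows that the finite signed measure $\sum_i \widetilde\pi(i)\,\delta_i$ together with the harmonic $f$ forces, by a summation-by-parts / invariance argument, $\widetilde\pi(i)\to c$ with $c = (\text{const})\cdot\lim f$.

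The verification of the Theorem~\ref{th:f.to.1} hypotheses should go as follows. Condition \eqref{delta}, $\sum_i|\delta(i)|<\infty$: since $\delta(i)=\log\E e^{\beta\xi(i)}+O(\text{boundary correction})$, assumption \eqref{conver} gives $\sum_i|\E e^{\beta\xi(i)}-1|<\infty$, hence $\sum_i|\log\E e^{\beta\xi(i)}|<\infty$; the boundary corrections are summable because the probability that a jump from $i$ undershoots $0$ is, by the stochastic domination \eqref{Xi} and $\E e^{\beta\Xi}<\infty$, exponentially small in $i$. Condition \eqref{lln}, that the tilted killed chain started from a large state stays above any fixed $N$ with probability tending to $1$: the tilted jump distributions converge to that of $\xi^{(\beta)}$, which has positive mean $\E\xi e^{\beta\xi}>0$ by the Cram\'er assumption, so this is a law-of-large-numbers/renewal estimate, with uniformity controlled by the stochastic majorant $\Xi$ (whose tilt $\Xi^{(\beta)}$ has finite mean since $\E\Xi e^{\beta\Xi}<\infty$). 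Condition \eqref{local.times.exp}, $\sup_i\E_i e^{\delta\ell(i)}<\infty$: this is exactly what Proposition~\ref{DF} (or Proposition~\ref{pro:suffi.for.ell.m}) is for --- the tilted chain has drift bounded below by a positive constant on a truncated scale, and has a stochastic lower bound on its downward jumps coming from $\Xi^{(\beta)}$ with finite mean, so Proposition~\ref{DF} yields \eqref{local.times.exp.exists}, and one then needs $\delta=\sum\delta^+(i)$ small enough --- here one may have to localize, noting that for $i$ large $\delta(i)$ is tiny, and absorb the finitely many large values of $\delta^+(i)$ into a stronger moment bound as in Example~2.

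The step I expect to be the main obstacle is the final identification $\widetilde\pi(i)\to c>0$ together with $c>0$ (as opposed to $c=0$, which would correspond to a faster-than-exponential decay of $\pi$). Knowing $f(i)\to1$ for the kernel $Q$ tells us about harmonic functions of $Q$, whereas $\widetilde\pi$ is harmonic for $Q^{\mathrm T}$; bridging the two requires either a genuine uniqueness theorem for bounded $Q^{\mathrm T}$-harmonic functions (via a Martin-boundary / ratio-limit argument, using irreducibility and the fact that $Q^{\mathrm T}$ is, like $Q$, a small perturbation at infinity of the tilted killed random walk kernel, whose boundary is trivial by the renewal theorem) or a direct coupling/renewal argument showing $e^{\beta i}\pi(i)$ converges. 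Positivity of the limit $c$ should follow because $\pi$ is a genuine positive measure summing to $1$ and $\widetilde\pi$ is $Q^{\mathrm T}$-harmonic and bounded below away from $0$ by the analogue of Proposition~\ref{th:f>0}; the lower bound $\liminf e^{\beta i}\pi(i)>0$ is where assumption \eqref{Xi} is again used, to guarantee the chain does reach large states with enough probability. I would handle this by comparison with the random walk with delay $W_n$, whose stationary tail $\pi_W(i)\sim ce^{-\beta i}$ is classical, using \eqref{Xi} to dominate $X_n$'s jumps by those of a random walk and \eqref{conver} to control the discrepancy, making the ratio $\pi(i)/\pi_W(i)$ converge.
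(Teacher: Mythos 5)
Your construction of $f$ for the tilted killed kernel and your verification that it satisfies the hypotheses of Theorem~\ref{th:f.to.1} (summability of $\delta$ from \eqref{conver}, condition \eqref{lln} from the positive drift of the tilted chain, and \eqref{local.times.exp} via Proposition~\ref{pro:suffi.for.ell.m} after truncating at a large $N$ so that $\delta_N=\sum_i\delta_N^+(i)$ is small) do match what the paper does. The genuine gap is exactly the step you yourself call the ``main obstacle'': converting $f(i)\to1$ into $e^{\beta i}\pi(i)\to c>0$. Neither of your proposed routes works as stated. A uniqueness theorem for bounded $Q^{\mathrm T}$-harmonic functions is not established anywhere, and in any case $\widetilde\pi=e^{\beta\cdot}\pi$ is harmonic for the \emph{unkilled} transpose $Q^{\mathrm T}$, whereas the $f$ you build is harmonic for the \emph{killed} kernel $Q^{(\beta)}_N$, so even a uniqueness result would not identify the two objects; moreover $\widetilde\pi$ is not a finite measure, contrary to your phrasing. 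The comparison-with-$W_n$ idea is also not what the paper does and is left entirely unspecified. The paper's bridge has three concrete ingredients that do not appear in your proposal: (i) the exit representation $\pi(i)=\sum_{j\le N}\pi(j)\sum_{n\ge0}\P_j\{X_n=i,\,\tau_N>n\}$ from Meyn--Tweedie; (ii) Doob's $h$-transform with $h(i)=e^{\beta i}f(i)$, which rewrites this as $\pi(i)=h(i)^{-1}\widehat U(i)\sum_{j\le N}\pi(j)h(j)$, where $\widehat U$ is the renewal measure of a transient asymptotically homogeneous chain $\widehat X_n$; and (iii) the key renewal theorem for such chains (Korshunov~\cite{Kor08}), which gives $\widehat U(i)\to1/\E\widehat\xi$ once one checks that $\widehat\xi(i)\Rightarrow\widehat\xi$ with $\E\widehat\xi>0$, that $|\widehat\xi(i)|$ has an integrable stochastic majorant (this is precisely where $\E\Xi e^{\beta\Xi}<\infty$ is used), and that $\sup_i\widehat U(i)<\infty$ (which follows from the exponential moment bound on local times). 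Substituting then yields $\pi(i)\sim c\,e^{-\beta i}$ with $c=\sum_{j\le N}\pi(j)h(j)/\E\xi e^{\beta\xi}>0$ automatically, so the positivity you were worried about comes for free rather than via a separate lower-bound argument.
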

It is worth mentioning that \eqref{conver} is weaker than conditions
we found in the literature. First, Borovkov and Korshunov \cite{BK1}
proved exponential asymptotics for $\pi$ under the condition
$$
\sum_{i=0}^\infty\int_{-\infty}^\infty
e^{\beta y}\left|\P\{\xi(i)<y\}-\P\{\xi<y\}\right|dy<\infty,
$$
which is definitely stronger than \eqref{conver} and implies,
in particular, that also the expectations of $\xi^{(\beta)}(i)$ converge with
a summable speed. Furthermore, to show that the constant $c$ in front
of $e^{-\beta i}$ is positive they introduced the following condition:
$$
\sum_{i=0}^\infty\bigl(\E e^{\beta\xi(i)}-1\bigr)^-i\log i<\infty.
$$
Second, Foley and McDonald \cite{FD} used the assumption,
which can be rewritten in our notations as follows
$$
\sum_{i=0}^\infty\sum_{j\in\Z}e^{\beta j}|\P\{\xi(i)=j\}-\P\{\xi=j\}|<\infty.
$$

Furthermore, the condition \eqref{conver} is quite close to the optimal one.
If, for example, $\E e^{\beta\xi(i)}-1$ are of the same sign and not summable,
then $\pi(i)e^{\beta i}$ converges either to zero or to infinity, see
Corollary \ref{cor_power} below. Thus, if \eqref{conver} is violated, then
$\pi(i)$ may have exponential asymptotics only in the case when
$\E e^{\beta\xi(i)}-1$ is changing its sign infinitely often.

\begin{example}\label{ex:3}
Consider a Markov chain $X_n$ which jumps to the next neighbours only:
$$
\P\{\xi(i)=1\}=1-\P\{\xi(i)=-1\}=p+\varphi(i).
$$
\end{example}
Assume that, as $i\to\infty$,
$$
\varphi(i)\sim\left\{
\begin{array}{ll}
i^{-\gamma}, &i=2k\\
-i^{-\gamma}, &i=2k+1
\end{array}
\right.
$$
with some $\gamma\in(1/2,1)$. Clearly, \eqref{conver} is not satisfied.
Let us look at the values of $X_n$ at even time moments, i.e.,
$$
Y_k=X_{2k},\quad k\ge 0.
$$
Then we have
\begin{align*}
&\P_i\{Y_1-i=-2\}=(q-\varphi(i))(q-\varphi(i-1)),\\
&\P_i\{Y_1-i=0\}=(q-\varphi(i))(p+\varphi(i-1))+(p+\varphi(i))(q-\varphi(i+1)),\\
&\P_i\{Y_1-i=2\}=(p+\varphi(i))(p+\varphi(i+1)),
\end{align*}
where $q:=1-p$. From these equalities we obtain
\begin{align*}
&\E_i\left[\left(\frac{q}{p}\right)^{Y_1-i}\right]-1=
\left(\frac{p^2}{q^2}-1\right)\P_i\{Y_1-i=-2\}+\left(\frac{q^2}{p^2}-1\right)\P_i\{Y_1-i=2\}\\
&\hspace{1cm}=\left(\frac{p^2}{q^2}-1\right)(q-\varphi(i))(q-\varphi(i-1))
+\left(\frac{q^2}{p^2}-1\right)(p+\varphi(i))(p+\varphi(i+1))\\
&\hspace{1cm}=-q\left(\frac{p^2}{q^2}-1\right)(\varphi(i)+\varphi(i-1))
+p\left(\frac{q^2}{p^2}-1\right)(\varphi(i)+\varphi(i+1))
+O(i^{-2\gamma}).
\end{align*}
Noting that $\varphi(i)+\varphi(i+1)=O(i^{-\gamma-1})$, we conclude that the sequence
$|\E_i(q/p)^{Y_1-i}-1|$ is summable and, consequently, we may apply Theorem \ref{th:conv}.
Since $\pi$ is stationary also for $Y_n$, we obtain $\pi(i)\sim c(p/q)^i$ as $i\to\infty$.
\hfill$\Box$

\begin{proof}[Proof of Theorem \ref{th:conv}]
Fix some $N\in\Zp$. As well-known
(see, e.g. \cite[Theorem 10.4.9]{MT}) the invariant measure
$\pi$ possesses the equality
\begin{equation}\label{pi-def}
\pi(i)=\sum_{j=0}^N \pi(j)\sum_{n=0}^\infty \P_j\{X_n=i;\tau_N>n\},
\end{equation}
where
$$
\tau_N:=\inf\{n\ge 1:X_n\le N\}.
$$
Let $h(i)$ be a harmonic function for $X_n$ killed in $[0,N]$,
that is, for every $i>N$,
$$
\E_i\{h(X_1);X_1>N\}=h(i).
$$
Then we can perform Doob's $h$-transform on $X_n$ killed in $[0,N]$
and define a new Markov chain $\widehat X_n$ on $\Zp$
with the following transition kernel
$$
\P_i\{\widehat X_1=j\}=\frac{h(j)}{h(i)}\P_i\{X_1=j;\tau_N>1\}
$$
if $h(i)>0$ and $\P_i\{\widehat X_1=j\}$ being arbitrary defined
if $h(i)=0$. Since $h$ is harmonic, then we also have
\begin{align}\label{connection}
\P_i\{\widehat X_n=j\}=\frac{h(j)}{h(i)}\P_i\{X_n=j;\tau_N>n\}
\ \mbox{ for all }n.
\end{align}
Combining (\ref{connection}) and (\ref{pi-def}), we get
\begin{eqnarray}\label{pi.i.pre}
\pi(i) &=& \frac{1}{h(i)}\sum_{j=0}^N\pi(j)h(j)
\sum_{n=0}^\infty \P_j\{\widehat X_n=i\}\nonumber\\
&=& \frac{\widehat U(i)}{h(i)}\sum_{j=0}^N\pi(j)h(j),
\end{eqnarray}
where $\widehat U$ is the renewal measure generated by
the chain $\widehat X_n$ with initial distribution
$$
\P\{\widehat X_0=j\}=\widehat c\pi(j)h(j),\ j\le N,
\quad\text{where }\widehat c:=\frac{1}{\sum_{j=0}^N\pi(j)h(j)}.
$$

Suppose that the harmonic function $h(i)$ is such that the jumps
$\widehat\xi(i)$ of the chain $\widehat X_n$ satisfy the following
conditions:
\begin{eqnarray}\label{hat.as.hom}
\widehat\xi(i)\Rightarrow\widehat\xi\quad\mbox{as }
x\to\infty,\quad \E\widehat\xi>0,
\end{eqnarray}
the family of random variables $\{|\widehat\xi(i)|,\ i\in\Zp\}$
admit an integrable majorant $\widehat\Xi$, that is,
\begin{eqnarray}\label{majoriz}
|\widehat\xi(i)| &\le_{\rm st}& \widehat\Xi
\quad\mbox{for all }i\in\Zp,\quad \E\widehat\Xi<\infty;
\end{eqnarray}
and
\begin{eqnarray}\label{finite.bound.U}
\sup_{i\in\Zp} \widehat U(i) &<& \infty.
\end{eqnarray}
Then the key renewal theorem for asymptotically homogeneous
in space Markov chains from Korshunov \cite{Kor08} states that
$\widehat U(i)\to 1/\E\widehat\xi$ as $i\to\infty$. Substituting
this into \eqref{pi.i.pre} we deduce the following asymptotics
\begin{eqnarray}\label{pi.asy.V}
\pi(i) &\sim& \frac{\sum_{j=0}^N\pi(j)h(j)}{\E\widehat\xi}
\frac{1}{h(i)} \quad\mbox{as }i\to\infty.
\end{eqnarray}

So, now we need to choose a level $N$ and to construct a harmonic
function $h(i)$ for $X_n$ killed in $[0,N]$ such that $h$ satisfies
the conditions \eqref{hat.as.hom}--\eqref{finite.bound.U}.
The intuition behind our construction of the function $h$ is simple.
Since we consider asymptotically homogeneous Markov chain,
the chain behaves similar to the random walk with jumps like $\xi$.
We assume that limiting jump satisfies Cram\'er's condition,
hence it should be so that $h(i)\sim e^{\beta i}$ as $i\to\infty$.

Consider the transition kernel
$$
Q^{(\beta)}_N(i,j):=\frac{e^{\beta j}}{e^{\beta i}}P(i,j)\I\{j>N\},
$$
which is the result of exponential change of measure.
By the theorem conditions, $Q^{(\beta)}_N(i,\Zp)$ is finite for every $i$.
Let us find a level $N$ such that the kernel $Q^{(\beta)}_N$
satisfies the conditions of Theorem \ref{th:f.to.1}.

Denote
\begin{eqnarray*}
\delta_N(i) &:=& \log Q^{(\beta)}_N(i,\Zp)\\
&=& \log\sum_{j=N+1}^\infty e^{\beta(j-i)}P(i,j)\\
&=& \log \E\{e^{\beta\xi(i)};i+\xi(i)>N\}.
\end{eqnarray*}
Since
\begin{eqnarray*}
\sum_{i=0}^\infty |\E\{e^{\beta\xi(i)};i+\xi(i)>N\}-1|
&\le& \sum_{i=0}^\infty |\E e^{\beta\xi(i)}-1|
+\sum_{i=0}^\infty \E\{e^{\beta\xi(i)};i+\xi(i)\le N\}\\
&\le& \sum_{i=0}^\infty |\E e^{\beta\xi(i)}-1|
+\sum_{i=0}^\infty e^{-\beta(i-N)},
\end{eqnarray*}
we conclude by the condition \eqref{conver} that the condition
\eqref{delta} of Theorem \ref{th:f.to.1} holds for every $N\in\Zp$.

Further, $\delta_N^+(i)\to 0$ as $N\to\infty$, for every $i\in\Zp$.
Moreover, $\delta_N^+(i)\le\log^+\E e^{\beta\xi(i)}$ where the sum
$$
\sum_{i=0}^\infty \log^+\E e^{\beta\xi(i)}
$$
is finite, due to the condition \eqref{conver}.
Then the dominated convergence theorem yields the convergence
\begin{eqnarray*}
\delta_N:=\sum_{i=0}^\infty \delta_N^+(i) &\to& 0\quad\mbox{as }N\to\infty.
\end{eqnarray*}
So, if we prove that, for some $\gamma>0$,
\begin{eqnarray}\label{local.times.exp.gamma}
\sup_{i\in\Zp}\E_i e^{\gamma\ell^{(\beta)}_N(i)} &<& \infty,
\end{eqnarray}
then we may choose sufficiently large $N$ such that the condition
\eqref{local.times.exp} of Theorem \ref{th:f.to.1} holds;
here $\ell^{(\beta)}_N(i)$ is the local time at state $i$ of the
underlying chain $X^{(\beta)}_{N,n}$, $n\ge 0$, of the kernel
$Q^{(\beta)}_N$.

By Proposition \ref{pro:suffi.for.ell.m},
the relation \eqref{local.times.exp.gamma} will follow if we
find, for sufficiently large $N$, a minorant with positive mean
for the jumps $\xi^{(\beta)}_N(i)$ of the chain $X^{(\beta)}_{N,n}$.
The asymptotic homogeneity of the Markov chain $X_n$ implies that
\begin{eqnarray}\label{xi.N}
\xi^{(\beta)}_N(i) &\Rightarrow& \xi^{(\beta)}\quad\mbox{as }i\to\infty,
\end{eqnarray}
where the limiting random variable has distribution
$$
\P\{\xi^{(\beta)}=j\}=e^{\beta j}\P\{\xi=j\}
$$
with positive mean $\E\xi e^{\beta\xi}$. Therefore, there exist
a sufficiently large $N$ and a random variable $\eta$ with
positive mean, $\E\eta>0$, such that
$$
\xi^{(\beta)}_N(i) \ge_{st} \eta\quad\mbox{for all }i\in\Zp,
$$
and a minorant is identified.

Finally, the condition \eqref{lln} also follows from
minorization and the convergence, for every fixed $N$,
\begin{eqnarray*}
\P\{i+\eta_1+\ldots+\eta_n\ge N\mbox{ for all }n\ge 1\} &\to& 1
\quad\mbox{as }i\to\infty.
\end{eqnarray*}
So, for sufficiently large $N$, the kernel $Q^{(\beta)}_N$ satisfies
all the conditions of Theorem \ref{th:f.to.1}. Therefore,
there exists a harmonic function $f$ for this kernel
such that $f(i)\to 1$ as $i\to\infty$.

Let us consider the function $h(i):=e^{\beta i}f(i)$.
For every $i\in\Zp$, we have the equality
\begin{eqnarray*}
\sum_{j=N+1}^\infty P(i,j)h(j)
&=& \sum_{j=N+1}^\infty P(i,j)e^{\beta j}f(j)\\
&=& e^{\beta i}\sum_{j=N+1}^\infty Q^{(\beta)}_N(i,j)f(j)\\
&=& e^{\beta i}f(i) =h(i),
\end{eqnarray*}
so that $h(i)$ is the harmonic function for the Markov chain $X_n$
killed in $[0,N]$. Let us check that $h$ produces $\widehat X_n$
satisfying the conditions
\eqref{hat.as.hom}--\eqref{finite.bound.U}. First, the condition
\eqref{hat.as.hom} holds because, for every $j\in\Z$,
$$
\P\{\widehat\xi(i)=j\}
=\frac{e^{\beta(i+j)}f(i+j)}{e^{\beta i}f(i)}\P\{\xi(i)=j\}
\to e^{\beta j}\P\{\xi=j\}\quad\mbox{as }i\to\infty.
$$
Notice that the limiting random variable $\widehat\xi$
has mean $\E\xi e^{\beta\xi}$.

Second, let us prove that the condition \eqref{majoriz} holds.
From the upper bound
\begin{eqnarray*}
\P\{\widehat\xi(i)>j\} &=&
\sum_{k=j+1}^\infty \frac{f(i+k)}{f(i)} e^{\beta k}\P\{\xi(i)=k\}\\
&\le& c_1\E\{e^{\beta\xi(i)};\xi(i)>j\}\\
&\le& c_1\E\{e^{\beta\Xi};\Xi>j\},
\end{eqnarray*}
owing to the condition \eqref{Xi}, we deduce that
$\widehat\xi(i) \le_{st} \Xi_1$ where
\begin{eqnarray}\label{Xi1}
\E \Xi_1 &\le& c_2\sum_{j=0}^\infty \E\{e^{\beta\Xi};\Xi>j\}
\le c_2 \E\Xi e^{\beta\Xi}<\infty.
\end{eqnarray}
On the other hand,
\begin{eqnarray*}
\P\{\widehat\xi(i)<-j\} &=&
\sum_{k=j+1}^i \frac{f(i-k)}{f(i)} e^{-\beta k}\P\{\xi(i)=-k\}\\
&\le& c_3\sum_{k=j+1}^\infty e^{-\beta k}
\le c_4e^{-\beta j},
\end{eqnarray*}
so that $\widehat\xi(i)\ge -\Xi_2$ where $\Xi_2$ has some
positive exponential moment finite. Together with \eqref{Xi1}
this implies fulfillment of the condition \eqref{majoriz}
for the function $h(i)=e^{\beta i}f(i)$.

Third, the condition \eqref{finite.bound.U} follows from the equalities
\begin{eqnarray*}
\widehat U(i) &=& \sum_{n=0}^\infty\P\{\widehat X_n=i\}\\
&=& f(i)\sum_{n=0}^\infty \P\{X^{(\beta)}_{N,n}=i\}
=f(i)\E\ell^{(\beta)}_N(i)
\end{eqnarray*}
and from the bound \eqref{local.times.exp.gamma} for exponential
moments of the local times of the chain $X^{(\beta)}_{N,n}$.

Therefore, we may apply \eqref{pi.asy.V} and deduce that,
as $i\to\infty$,
\begin{eqnarray*}
\pi(i) &\sim& \frac{\sum_{j=0}^N\pi(j)h(j)}{\E\widehat\xi}
\frac{e^{-\beta i}}{f(i)}
\sim \frac{\sum_{j=0}^N\pi(j)e^{\beta j}f(j)}{\E\xi e^{\beta\xi}}
e^{-\beta i}.
\end{eqnarray*}
The proof of Theorem \ref{th:conv} is complete.
\end{proof}

We now turn to the case where $\E e^{\beta\xi(i)}$ converges to $1$
in a non-summable way. Our next result describes the behaviour of
$\pi$ in terms of a non-uniform exponential change of measure.
\begin{theorem}\label{th:non.gen}
Suppose that, for some $\varepsilon>0$,
\begin{eqnarray}\label{Xi.eps}
\sup_{i\in\Zp}\E e^{(\beta+\varepsilon)\xi(i)}<\infty.
\end{eqnarray}
Assume also that there exists a differentiable function $\beta(x)$
such that
\begin{eqnarray}\label{E.beta.1}
\sum_{i=0}^\infty |\E e^{\beta(i)\xi(i)}-1| &<& \infty,
\end{eqnarray}
and $|\beta'(x)|\le\gamma(x)$ where $\gamma(x)$ is a decreasing
integrable function of order $o(1/x)$. Then, for some $c>0$,
\begin{eqnarray*}
\pi(i) &\sim& ce^{-\int_0^i \beta(y)dy}\quad\mbox{as }i\to\infty.
\end{eqnarray*}
\end{theorem}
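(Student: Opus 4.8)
The plan is to follow the same scheme as in the proof of Theorem \ref{th:conv}, but with the uniform exponential change of measure $e^{\beta i}$ replaced by the non-uniform one $e^{\int_0^i\beta(y)dy}$. Set $H(i):=\int_0^i\beta(y)dy$ and, for a level $N$ to be chosen later, consider the transition kernel
$$
Q_N(i,j):=e^{H(j)-H(i)}P(i,j)\I\{j>N\},
$$
obtained from $P$ by killing in $[0,N]$ and performing the state-dependent tilt. Condition \eqref{Xi.eps} guarantees that $Q_N(i,\Zp)<\infty$ for every $i$, because $H(j)-H(i)\le(\beta+\varepsilon)(j-i)$ for $j$ large (as $\beta(y)\to\beta$ and $|\beta'|=o(1/x)$ forces $\beta(y)\le\beta+\varepsilon$ eventually), while the finitely many remaining terms are controlled by \eqref{Xi.eps} directly. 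Write $\delta_N(i):=\log Q_N(i,\Zp)=\log\E\{e^{H(i+\xi(i))-H(i)};i+\xi(i)>N\}$.

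The main point will be to verify that, for $N$ large, $Q_N$ satisfies the hypotheses of Theorem \ref{th:f.to.1}, so that it carries a harmonic function $f$ with $f(i)\to1$. For the summability \eqref{delta} I would estimate $\bigl|\E\{e^{H(i+\xi(i))-H(i)};i+\xi(i)>N\}-1\bigr|$ by splitting off the event $\{i+\xi(i)\le N\}$ (whose contribution is summable by the exponential bound \eqref{Xi.eps}) and then comparing $e^{H(i+j)-H(i)}$ with $e^{\beta(i)j}$ on $\{i+\xi(i)>N\}$: by Taylor's formula $H(i+j)-H(i)=\beta(i)j+\tfrac12\beta'(\theta)j^2$ for some $\theta$ between $i$ and $i+j$, and $|\beta'(\theta)|\le\gamma(\min(i,i+j))$; on the bulk event $|\xi(i)|\le\sqrt{i}$ this makes $|H(i+j)-H(i)-\beta(i)j|$ small, while on $\{|\xi(i)|>\sqrt{i}\}$ the exponential moment \eqref{Xi.eps} gives a summable remainder. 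Hence $\sum_i|\E e^{\beta(i)\xi(i)}-1|<\infty$ from \eqref{E.beta.1} transfers to $\sum_i|\delta_N(i)|<\infty$. For the same reason $\delta_N^+(i)\to0$ as $N\to\infty$ pointwise, dominated by a summable sequence, so $\delta_N:=\sum_i\delta_N^+(i)\to0$ as $N\to\infty$. The drift condition \eqref{lln} and the exponential local-time bound \eqref{local.times.exp} follow exactly as in Theorem \ref{th:conv}: the jumps $\xi_N(i)$ of the underlying chain $X_{N,n}$ of $Q_N$ converge weakly, as $i\to\infty$, to $\xi^{(\beta)}$ with $\P\{\xi^{(\beta)}=j\}=e^{\beta j}\P\{\xi=j\}$ (since $H(i+j)-H(i)=\beta j+o(1)$ for fixed $j$), which has mean $\E\xi e^{\beta\xi}>0$; so for $N$ large there is a stochastic minorant $\eta$ with $\E\eta>0$ uniformly in $i$, and Proposition \ref{pro:suffi.for.ell.m} gives \eqref{local.times.exp} while the minorant gives \eqref{lln}.

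Once $f$ is produced, set $h(i):=e^{H(i)}f(i)$. The computation $\sum_{j>N}P(i,j)h(j)=e^{H(i)}\sum_{j>N}Q_N(i,j)f(j)=e^{H(i)}f(i)=h(i)$ shows $h$ is harmonic for $X_n$ killed in $[0,N]$. The Doob $h$-transform $\widehat X_n$ then has jump distribution $\P\{\widehat\xi(i)=j\}=\frac{h(i+j)}{h(i)}\P\{\xi(i)=j\}$, which converges to $e^{\beta j}\P\{\xi=j\}$, so \eqref{hat.as.hom} holds with $\E\widehat\xi=\E\xi e^{\beta\xi}>0$. The integrable majorant \eqref{majoriz} is checked as before: on the right tail $\frac{h(i+k)}{h(i)}\le c_1 e^{\beta k}$ (using $e^{H(i+k)-H(i)}\le e^{(\beta+\varepsilon/2)k}$ for $k$ large together with $f(i+k)/f(i)$ bounded) combined with \eqref{Xi.eps} to beat $\E\Xi_1 e^{\beta\Xi_1}$-type bounds by Hölder against the $(\beta+\varepsilon)$-moment; on the left tail $\frac{h(i-k)}{h(i)}\le c_3 e^{-\beta k/2}$ gives a majorant with a positive exponential moment. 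Finally $\widehat U(i)=\sum_n\P\{\widehat X_n=i\}=f(i)\E\ell_N(i)$ is bounded by the exponential local-time estimate, so \eqref{finite.bound.U} holds. Applying the key renewal theorem of \cite{Kor08} in the form $\widehat U(i)\to1/\E\widehat\xi$, the representation \eqref{pi.i.pre} yields
$$
\pi(i)\sim\frac{\sum_{j=0}^N\pi(j)h(j)}{\E\widehat\xi}\,\frac{1}{h(i)}
=\frac{\sum_{j=0}^N\pi(j)e^{H(j)}f(j)}{\E\xi e^{\beta\xi}}\,e^{-H(i)}(1+o(1)),
$$
and since $f(i)\to1$ this gives $\pi(i)\sim c\,e^{-\int_0^i\beta(y)dy}$ with $c>0$, as claimed.

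The step I expect to be the main obstacle is the transfer of condition \eqref{E.beta.1} to summability of $\delta_N$: controlling the discrepancy between the state-dependent tilt $e^{H(i+\xi(i))-H(i)}$ and the "frozen" tilt $e^{\beta(i)\xi(i)}$ uniformly enough that the quadratic Taylor remainder $\tfrac12\beta'(\theta)\xi(i)^2$, integrated against the law of $\xi(i)$, produces a summable-in-$i$ error. This is where the hypotheses $|\beta'|\le\gamma$, $\gamma$ integrable and $\gamma(x)=o(1/x)$, and the uniform $(\beta+\varepsilon)$-exponential moment \eqref{Xi.eps} all have to be used together — the $o(1/x)$ decay is what makes $\sum_i\gamma(i)\E\xi(i)^2e^{(\cdot)}$-type series converge, and truncating at $|\xi(i)|\le\sqrt i$ is the device that keeps the remainder on the bulk event negligible while pushing the heavy-tail contribution into the exponential moment.
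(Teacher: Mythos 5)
Your proposal follows essentially the same route as the paper's proof: the paper also performs the non-uniform change of measure with $g(x)=e^{\int_0^x\beta(y)dy}$, observes that necessarily $\beta(i)\to\beta$, splits on $\{|\xi(i)|\le\sqrt i\}$ versus its complement, bounds $\bigl|\int_i^{i+\xi(i)}\beta(y)dy-\beta(i)\xi(i)\bigr|\le\gamma(i-\sqrt i)\xi^2(i)/2$ to transfer \eqref{E.beta.1} into summability of $|Q^{(g)}(i,\Zp)-1|$, and then invokes the Theorem \ref{th:f.to.1} plus Doob-transform plus key-renewal machinery exactly as in Theorem \ref{th:conv}. You spell out the last stage (construction of $h$, checks \eqref{hat.as.hom}--\eqref{finite.bound.U}, application of \cite{Kor08}) in more detail than the paper, which simply refers back to the proof of Theorem \ref{th:conv}, but the substance is identical.
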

It should be noted that Theorem \ref{th:conv} can be seen as a special
case of Theorem \ref{th:non.gen} with $\beta(x)\equiv\beta$. But we
decided to split these statements, since the proof of
Theorem \ref{th:non.gen} contains a reduction to the case of summable
rate of convergence, which has been considered in Theorem \ref{th:conv}.

\begin{proof}[Proof of Theorem \ref{th:non.gen}]
Consider the function $g(x):=e^{\int_0^x \beta(y)dy}$ and
perform the corresponding change of measure:
$$
Q^{(g)}(i,j):=\frac{g(j)}{g(i)}P(i,j).
$$
First let us estimate
\begin{eqnarray*}
Q^{(g)}(i,\Zp)-1 &=& \frac{\E g(i+\xi(i))-g(i)}{g(i)}\\
&=& \E e^{\int_i^{i+\xi(i)}\beta(y)dy}-1.
\end{eqnarray*}

Observe that, with necessity, $\beta(i)\to\beta$ so that,
by the condition \eqref{Xi.eps},
\begin{eqnarray*}
\E\Bigl\{e^{\int_i^{i+\xi(i)}\beta(y)dy}-1;|\xi(i)|>\sqrt i\Bigr\}
&=& o(e^{-\varepsilon i/2})\quad\mbox{as }i\to\infty.
\end{eqnarray*}
Further, condition on the derivative of $\beta(y)$ implies that
\begin{eqnarray*}
\biggl|\int_i^{i+\xi(i)}\beta(y)dy-\beta(i)\xi(i)\biggr|
&\le& \int_i^{i+\xi(i)}|\beta(y)-\beta(i)|dy\\
&\le& \sup_{|y|\le\sqrt i}|\beta'(i+y)|\xi^2(i)/2\\
&\le& \gamma(i-\sqrt i)\xi^2(i)/2.
\end{eqnarray*}
Uniformly in $|\xi(i)|\le\sqrt i$, we have
$\gamma(i-\sqrt i)\xi^2(i)\le\gamma(i-\sqrt i)i\to 0$ as $i\to\infty$. Therefore,
again in view of the condition \eqref{Xi.eps},
\begin{align*}
\E\Bigl\{e^{\int_i^{i+\xi(i)}\beta(y)dy};|\xi(i)|\le\sqrt i\Bigr\}
&=\E e^{\beta(i)\xi(i)}+O(\gamma(i-\sqrt i)+e^{-\varepsilon i/2})\quad\mbox{as }i\to\infty.
\end{align*}
Hence,
\begin{eqnarray*}
|Q^{(g)}(i,\Zp)-1| &\le& |\E e^{\beta(i)\xi(i)}-1|
+O(\gamma(i-\sqrt i)+e^{-\varepsilon i/2}).
\end{eqnarray*}
Taking into account \eqref{E.beta.1} and that the sequence
$\gamma(i-\sqrt i)$ is summable, we obtain that
\begin{eqnarray*}
\sum_{i=0}^\infty|Q^{(g)}(i,\Zp)-1| &<& \infty.
\end{eqnarray*}
This allows to apply Theorem \ref{th:f.to.1} to the kernel
$Q^{(g)}$ killed in some set $[0,N]$ in the same way
as in the proof of Theorem \ref{th:conv} and to deduce that
$\pi(i)\sim c/g(i)$ as $i\to\infty$, which completes the proof.
\end{proof}

Since $\beta(x)$ is not given explicitly, Theorem \ref{th:non.gen}
can not be seen as a final statement. For this reason we describe below
some situation where $\beta$ can be expressed via the difference
$\E e^{\beta \xi(i)}-1$.
\begin{corollary}\label{cor:non.sum}
Assume the condition \eqref{Xi.eps} and that there exists
a differentiable function $\alpha(x)$ such that
$\alpha'(x)$ is regularly varying at infinity with index $-2<r<-3/2$ and
\begin{eqnarray}\label{non.conver}
\E e^{\beta\xi(i)}-1 &=& \gamma(i)+\alpha(i),
\end{eqnarray}
where $\sum_{j=0}^\infty|\gamma(i)|<\infty$. Suppose also that
\begin{eqnarray}\label{conver.of.E}
\sum_{j=0}^\infty|\alpha(i)(\E\xi(i)e^{\beta\xi(i)}-m)| &<& \infty,
\end{eqnarray}
where $m:=\E\xi e^{\beta\xi}$. Then
\begin{eqnarray}\label{answer.1}
\pi(i) &\sim& c e^{-\beta i+A(i)/m}\quad\mbox{as }i\to\infty,
\end{eqnarray}
where $c>0$ and $A(x):=\int_0^x\alpha(y)dy$.
\end{corollary}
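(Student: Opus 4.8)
The plan is to derive this from Theorem~\ref{th:non.gen} by exhibiting the right perturbation of the Cram\'er exponent. Since in that theorem the change of measure uses $g(x)=e^{\int_0^x\beta(y)\,dy}$ and the claimed answer is $\pi(i)\sim c\,e^{-\beta i+A(i)/m}$, the obvious candidate is
\[
\beta(x):=\beta-\frac{\alpha(x)}{m},\qquad m:=\E\xi e^{\beta\xi}>0
\]
(positivity of $m$ being the standard consequence of convexity of $s\mapsto\E e^{s\xi}$ together with $\E e^{\beta\xi}=1$, $\E\xi<0$); then $\int_0^i\beta(y)\,dy=\beta i-A(i)/m$ \emph{exactly}, so \eqref{answer.1} will follow word for word from the conclusion $\pi(i)\sim c/g(i)$ of Theorem~\ref{th:non.gen} once we have verified its hypotheses for this $\beta$. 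Two preliminary observations fix the regularity of $\alpha$: first, $\E e^{\beta\xi(i)}\to1$ (by \eqref{asymp.hom} and the uniform integrability of $\{e^{\beta\xi(i)}\}$ supplied by \eqref{Xi.eps}) and $\gamma(i)\to0$, hence $\alpha(i)\to0$; second, since $\alpha'$ is regularly varying of index $r<-1$ and therefore integrable near infinity, Karamata's theorem gives $\alpha(x)=-\int_x^\infty\alpha'(y)\,dy\sim\frac{x\,\alpha'(x)}{r+1}$, so $|\alpha|$ is regularly varying of index $r+1\in(-1,-1/2)$. In particular $\alpha(i)^2$ is regularly varying of index $2(r+1)<-1$, whence $\sum_i\alpha(i)^2<\infty$; this is exactly where $r<-3/2$ is used, while $r>-2$ only ensures that $A$ is unbounded (for $r\le-2$ one has $\alpha$ summable and Theorem~\ref{th:conv} applies directly).

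The main step is to establish condition \eqref{E.beta.1}, namely $\sum_i|\E e^{\beta(i)\xi(i)}-1|<\infty$, by a truncation-and-Taylor argument in the spirit of the proof of Theorem~\ref{th:non.gen}. Writing $\beta(i)-\beta=-\alpha(i)/m$ and splitting on $\{|\xi(i)|\le\sqrt i\}$ and its complement: since $\beta(i)\to\beta<\beta+\varepsilon$, the contribution of the large-jump event to $\E e^{\beta(i)\xi(i)}$ and to $\E e^{\beta\xi(i)}$ is $O(e^{-c\sqrt i})$ by \eqref{Xi.eps}, which is summable in $i$. On $\{|\xi(i)|\le\sqrt i\}$ we have $|\beta(i)-\beta|\,|\xi(i)|\le|\alpha(i)|\sqrt i\to0$, so $e^{(\beta(i)-\beta)\xi(i)}=1+(\beta(i)-\beta)\xi(i)+O((\beta(i)-\beta)^2\xi(i)^2)$ uniformly on that event; multiplying by $e^{\beta\xi(i)}$, taking expectations, and using $\sup_i\E\{\xi(i)^2e^{\beta\xi(i)};|\xi(i)|\le\sqrt i\}<\infty$ (again from \eqref{Xi.eps}), the zeroth-order term yields $1+\gamma(i)+\alpha(i)+O(e^{-c\sqrt i})$, the first-order term yields $-\frac{\alpha(i)}{m}\E\xi(i)e^{\beta\xi(i)}+O(e^{-c\sqrt i})=-\alpha(i)-\frac{\alpha(i)}{m}(\E\xi(i)e^{\beta\xi(i)}-m)+O(e^{-c\sqrt i})$, and the second-order term is $O(\alpha(i)^2)$. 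The two copies of $\alpha(i)$ cancel, leaving
\[
|\E e^{\beta(i)\xi(i)}-1|\ \le\ |\gamma(i)|+\frac1m\bigl|\alpha(i)(\E\xi(i)e^{\beta\xi(i)}-m)\bigr|+O(\alpha(i)^2)+O(e^{-c\sqrt i}),
\]
and the right-hand side is summable by $\sum_i|\gamma(i)|<\infty$, by \eqref{conver.of.E}, and by $\sum_i\alpha(i)^2<\infty$.

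It remains to check the derivative condition. We have $\beta'(x)=-\alpha'(x)/m$, regularly varying of index $r<-1$, hence integrable and of order $o(1/x)$; setting $\gamma_*(x):=\sup_{y\ge x}|\beta'(y)|$ (finite for large $x$, extended arbitrarily to a decreasing function on all of $\Rp$) we get $\gamma_*(x)\sim|\beta'(x)|$ by regular variation, so $\gamma_*$ is a decreasing integrable majorant of $|\beta'|$ of order $o(1/x)$, as Theorem~\ref{th:non.gen} requires. Applying that theorem and substituting $\int_0^i\beta(y)\,dy=\beta i-A(i)/m$ proves \eqref{answer.1} with $c>0$. The one genuinely delicate point is the second-order bookkeeping in the displayed estimate: one must make sure that the terms of size $\alpha(i)$ cancel exactly (so that the residual is $O(\alpha(i)^2)$, whose summability is precisely what forces $r<-3/2$) and that the truncation of the large jumps is justified uniformly in $i$ via \eqref{Xi.eps}.
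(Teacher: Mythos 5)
Your proof is correct and follows essentially the same route as the paper's: same choice $\beta(x)=\beta-\alpha(x)/m$, same truncation at $|\xi(i)|\le\sqrt i$, same Taylor expansion producing the cancellation of the $\alpha(i)$ terms, same invocation of Theorem~\ref{th:non.gen}. You are somewhat more explicit than the paper on two small points — deriving $\alpha(i)\to 0$ and $\sum_i\alpha^2(i)<\infty$ via Karamata's theorem (the paper simply asserts these), and building a monotone integrable majorant $\gamma_*$ of $|\beta'|$ (the paper declares the derivative condition satisfied without comment) — but these are refinements, not a different argument.
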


\begin{proof}
Notice that, since $-2<r<-3/2$, $A(x)\to\infty$, $A(x)=o(x)$
as $x\to\infty$ and $\sum_{i=1}^\infty\alpha^2(i)<\infty$.

Take $\beta(x):=\beta-\alpha(x)/m$. Since $r<-3/2$,
$\alpha(i)=o(1/\sqrt i)$. Hence, by Taylor's theorem,
uniformly in $|\xi(i)|\le\sqrt i$,
\begin{eqnarray*}
e^{-\alpha(i)\xi(i)/m} &=& 1-\alpha(i)\xi(i)/m+O(\alpha^2(i)\xi^2(i)).
\end{eqnarray*}
which yields
\begin{eqnarray*}
\E e^{\beta(i)\xi(i)} &=& \E e^{\beta\xi(i)}
-\alpha(i)\E\xi(i)e^{\beta\xi(i)}/m+O(\alpha^2(i))\\
&=& \E e^{\beta\xi(i)}-\alpha(i)
+O(|\alpha(i)(\E\xi(i)e^{\beta\xi(i)}-m)|+\alpha^2(i))\\
&=& 1+\gamma(i)+O(|\alpha(i)(\E\xi(i)e^{\beta\xi(i)}-m)|+\alpha^2(i)).
\end{eqnarray*}
Thus, the function $\beta(x)$ satisfies all the conditions
of Theorem \ref{th:non.gen} and the proof is complete.
\end{proof}

Notice that the key condition on the rate of convergence of
$\E e^{\beta\xi(i)}$ to $1$ that implies asymptotics
\eqref{answer.1} in the latter corollary is that the sequence
$\alpha^2(i)$ is summable. If it is not so, that is, if the
index $r+1$ of regular variation of the function $\alpha(x)$
is between $-1/2$ and $0$, then the asymptotic behaviour of
$\pi(i)$ is different from \eqref{answer.1}
which is specified in the following corollary.

\begin{corollary}\label{cor_power}
Assume the condition \eqref{Xi.eps} and that there exists
a differentiable function $\alpha(x)$ such that
$$
|\alpha(x)|\le \frac{c}{(1+x)^{\frac{1}{M+1}+\varepsilon}}
$$
for some $c<\infty$, $M\in\N$, and $\varepsilon>0$,
\begin{equation}\label{cor:integr}
|\alpha'(x)|\le \gamma_1(x)
\end{equation}
for some decreasing integrable $\gamma_1(x)$
and
$$
\E e^{\beta\xi(i)}-1=\alpha(i)+\gamma_2(i),\quad i\ge 0,
$$
where $\sum_{i=0}^\infty\gamma_2(i)<\infty$.
Assume also that, for every $k=1$, $2$, \ldots, $M$,
\begin{equation}\label{cor.1}
m_k(i)=m_k+\sum_{j=1}^{M-k} D_{k,j}\alpha^j(i)+O(\alpha^{M-k+1}(i)),
\end{equation}
where $m_k(i):=\E\xi^k(i)e^{\beta\xi(i)}$ and
$m_k:=\E\xi^ke^{\beta\xi}$. Then there exist real numbers
$R_1$, $R_2$, \ldots, $R_M$ such that
\begin{equation}
\label{cor.2}
\pi(i)\sim c\exp\biggl\{-\beta i-\sum_{k=1}^M R_k\int_0^i\alpha^k(x)dx\biggr\}
\quad\mbox{as }i\to\infty.
\end{equation}
\end{corollary}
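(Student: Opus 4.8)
The plan is to reduce Corollary \ref{cor_power} to Theorem \ref{th:non.gen} by constructing an appropriate exponent $\beta(x)$ iteratively, peeling off powers $\alpha^k(x)$ one at a time. The obstruction is that the naive choice $\beta(x)=\beta-\alpha(x)/m$ used in Corollary \ref{cor:non.sum} only works when $\alpha^2(i)$ is summable; here $\alpha^2$ need not be summable (nor need $\alpha^M$ in general, which is precisely why we allow up to $M$ correction terms), so we must carry the Taylor expansion of $\E e^{\beta(i)\xi(i)}$ to higher order and cancel each non-summable term $\alpha^j(i)$, $j=1,\dots,M$, by a suitable polynomial-in-$\alpha$ adjustment of $\beta(x)$.

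\medskip

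\noindent\textbf{Step 1: Ansatz for $\beta(x)$.} I would look for
$$
\beta(x)=\beta-\sum_{k=1}^M R_k\alpha^k(x)
$$
with real coefficients $R_1,\dots,R_M$ to be determined. Then $g(x):=e^{\int_0^x\beta(y)\,dy}=\exp\{\beta x-\sum_{k=1}^M R_k\int_0^x\alpha^k(y)\,dy\}$, which is exactly the exponent appearing in \eqref{cor.2}, so once Theorem \ref{th:non.gen} applies to the kernel $Q^{(g)}$ the conclusion $\pi(i)\sim c/g(i)$ follows immediately.

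\medskip

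\noindent\textbf{Step 2: Expanding $\E e^{\beta(i)\xi(i)}$.} Since $|\alpha(x)|\le c(1+x)^{-1/(M+1)-\varepsilon}$, we have $\alpha(i)^{M+1}=o(1/i)$, hence $\alpha(i)=o(i^{-1/(M+1)})$ and, on the event $\{|\xi(i)|\le\sqrt i\}$, the quantity $\sum_k R_k\alpha^k(i)\xi(i)$ is small; using \eqref{Xi.eps} to discard $\{|\xi(i)|>\sqrt i\}$ (contributing $o(e^{-\varepsilon i/2})$) and Taylor-expanding the exponential to order $M$, I would obtain
$$
\E e^{\beta(i)\xi(i)}=\sum_{n=0}^{M}\frac{1}{n!}\E\Bigl(-\sum_{k=1}^M R_k\alpha^k(i)\xi(i)\Bigr)^{\!n}e^{\beta\xi(i)}+O(\alpha^{M+1}(i)).
$$
Collecting by powers of $\alpha(i)$ and substituting the expansions \eqref{cor.1} for the moments $m_k(i)=\E\xi^k(i)e^{\beta\xi(i)}$, the coefficient of $\alpha(i)^j$ becomes a universal polynomial $P_j(R_1,\dots,R_j;m_1,\dots,m_j,D_{\cdot,\cdot})$; in particular the coefficient of $\alpha(i)^1$ is $\alpha(i)(1-R_1 m_1)$ after one uses $\E e^{\beta\xi(i)}-1=\alpha(i)+\gamma_2(i)$.

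\medskip

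\noindent\textbf{Step 3: Solving for the $R_k$ recursively.} I would choose $R_1,\dots,R_M$ so that $P_j=0$ for each $j=1,\dots,M$. This is a triangular system: $P_1=0$ fixes $R_1=1/m_1$ (note $m_1=m=\E\xi e^{\beta\xi}>0$); given $R_1,\dots,R_{j-1}$, the equation $P_j=0$ is linear in $R_j$ with leading coefficient $m_1\neq0$, so it has a unique solution. With these choices,
$$
\E e^{\beta(i)\xi(i)}=1+\gamma_2(i)+O(\alpha^{M+1}(i)),
$$
and since $\alpha^{M+1}(i)=o(1/i)$ is summable and $\sum_i\gamma_2(i)<\infty$, the condition \eqref{E.beta.1} of Theorem \ref{th:non.gen} holds.

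\medskip

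\noindent\textbf{Step 4: Verifying the derivative condition.} It remains to check $|\beta'(x)|\le\gamma(x)$ for a decreasing integrable $\gamma(x)=o(1/x)$. We have $\beta'(x)=-\sum_{k=1}^M kR_k\alpha^{k-1}(x)\alpha'(x)$. By \eqref{cor:integr}, $|\alpha'(x)|\le\gamma_1(x)$ with $\gamma_1$ decreasing and integrable, and $\alpha(x)$ is bounded; multiplying a bounded factor by $\gamma_1$ and replacing it by a decreasing integrable majorant of order $o(1/x)$ (possible because a decreasing integrable function is automatically $o(1/x)$) gives the required $\gamma$. Hence all hypotheses of Theorem \ref{th:non.gen} are met and $\pi(i)\sim c/g(i)$, which is \eqref{cor.2}.

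\medskip

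\noindent The main obstacle is Step 2--3: organizing the multi-index Taylor expansion so that the powers of $\alpha(i)$ separate cleanly and verifying that the resulting triangular system for $R_1,\dots,R_M$ is always solvable (which rests on $m_1>0$). The bookkeeping is routine but must be done carefully to ensure that all cross terms of total $\alpha$-degree $\le M$ are captured and that the remainder is genuinely $O(\alpha^{M+1}(i))$ uniformly, using \eqref{Xi.eps} to control the tail of $\xi(i)$.
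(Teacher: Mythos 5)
Your proposal is correct and follows essentially the same route as the paper: you posit $\beta(x)=\beta\mp\sum_{k=1}^M R_k\alpha^k(x)$, Taylor-expand $\E e^{\beta(i)\xi(i)}$ to order $M$ using the moment expansions \eqref{cor.1}, solve the resulting triangular polynomial system (noting the $z^j$-coefficient is $m_1R_j+(\text{terms in }R_1,\dots,R_{j-1})$ with $m_1\ne0$), verify the derivative bound from \eqref{cor:integr}, and invoke Theorem \ref{th:non.gen}. The only cosmetic difference is a sign convention on the $R_k$ (your $R_k$ are the negatives of the paper's), which is immaterial since the corollary only asserts existence of such reals.
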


\begin{proof}
Define
$$
\Delta(x):=\sum_{k=1}^M R_k\alpha^k(x).
$$
In view of Theorem \ref{th:non.gen} it suffices to show that there exist $R_1,R_2,\ldots,R_M$ such that
\begin{equation}
\sum_{i=1}^\infty\left|\E e^{(\beta+\Delta(i))\xi(i)}-1\right|<\infty.
\end{equation}
Indeed, $\Delta(x)$ is differentiable and $|\Delta'(x)|\le C|\alpha'(x)|$.
Therefore, we may apply Theorem \ref{th:non.gen}
with $\beta(x)=\beta+\Delta(x)$.

By Taylor's theorem, the calculations similar
to the previous corollary show that, as $i\to\infty$,
\begin{align*}
\E e^{(\beta+\Delta(i))\xi(i)}
&=\E e^{\beta\xi(i)}+\sum_{k=1}^M\frac{m_k(i)}{k!}\Delta^k(i)+
O(\Delta^{M+1}(i))\\
&=1+\alpha(i)+\gamma_2(i)+\sum_{k=1}^M\frac{m_k(i)}{k!}\Delta^k(i)
+O(\alpha^{M+1}(i)+e^{-\varepsilon i/2}).
\end{align*}
From this equality we infer that we may determine $R_1$, $R_2$,
\ldots, $R_M$ by the relation
\begin{align}\label{cor.3}
\alpha(i)+\sum_{k=1}^M\frac{m_k(i)}{k!}\Delta^k(i)
=O(\alpha^{M+1}(i)).
\end{align}
It follows from the assumption \eqref{cor.1} and the bound
$\Delta(x)=O(\alpha(x))$ that \eqref{cor.3} is equivalent to
$$
z+\sum_{k=1}^M\frac{1}{k!}\biggl(m_k+\sum_{j=1}^{M-k}D_{k,j}z^j\biggr)
\biggl(\sum_{j=1}^M R_jz^j\biggr)^k=O(z^{M+1})
\quad\mbox{ as }z\to 0.
$$
Consequently, the coefficient at $z^k$ should be zero for every
$k\le  M$, and we can determine all $R_k$ recursively.
For example, the coefficient at $z$ equals $1+m_1R_1$.
Thus, $R_1=-1/m_1$. Further, the coefficient
at $z^2$ is $D_{1,1}R_1+m_1R_2+m_2R_1^2/2$ and, consequently,
$$
R_2=\frac{D_{1,1}}{m_1^2}-\frac{m_2}{2m_1^3}.
$$
All further coefficients can be found in the same way.
\end{proof}

If $\alpha(x)$ from Corollary \ref{cor_power}  decreases slower than
any power of $x$ but \eqref{cor:integr} and \eqref{cor.1} remain
valid, then one has, by the same arguments,
$$
\pi(i)=\exp\biggl\{-\beta i-\sum_{k=1}^M R_k\int_0^i\alpha^k(x)dx
+O\left(\int_0^i\alpha^{M+1}(x)dx\right)\biggr\}
$$
which can be seen as a corrected logarithmic asymptotic for $\pi$.
To obtain precise asymptotics one needs more information on the
moments $m_k(i)$.
\begin{corollary}\label{cor_full}
Assume the condition \eqref{Xi.eps} and that there exists
a differentiable function $\alpha(x)$ such that
\eqref{cor:integr} holds,
\begin{equation}\label{cor.0.full}
\E e^{\beta\xi(i)}-1=\alpha(i),\quad i\ge 1
\end{equation}
and
\begin{equation}\label{cor.1.full}
m_k(i)=m_k+\sum_{j=1}^\infty D_{k,j}\alpha^j(i)
\end{equation}
for all $k\ge 1$. Assume furthermore that
$\sup_{k\ge 1}\sum_{j=1}^\infty D_{k,j}r^j<\infty$ for some $r>0$.
Then there exist real numbers
$R_1$, $R_2$, \ldots,
$$
\pi(i)\sim c\,\exp\biggl\{-\beta i-\sum_{k=1}^\infty R_k\int_0^i\alpha^k(x)dx\biggr\}.
$$
\end{corollary}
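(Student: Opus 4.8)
The plan is to mimic the proof of Corollary \ref{cor_power}, but now allowing infinitely many correction terms $R_k$ and controlling the tails of the resulting series. As before, I would set $\Delta(x):=\sum_{k=1}^\infty R_k\alpha^k(x)$ and aim to apply Theorem \ref{th:non.gen} with $\beta(x)=\beta+\Delta(x)$. The first task is to show that the $R_k$ can be defined consistently. Plugging $\beta(x)=\beta+\Delta(x)$ into $\E e^{(\beta+\Delta(i))\xi(i)}$ and expanding by Taylor's theorem gives, formally,
$$
\E e^{(\beta+\Delta(i))\xi(i)}=\E e^{\beta\xi(i)}+\sum_{k=1}^\infty \frac{m_k(i)}{k!}\Delta^k(i)
=1+\alpha(i)+\sum_{k=1}^\infty\frac{m_k(i)}{k!}\Delta^k(i),
$$
using \eqref{cor.0.full}. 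Substituting \eqref{cor.1.full} for $m_k(i)$, every term is a power series in $\alpha(i)$, so the requirement that the total be summable reduces to requiring the coefficient of $\alpha^n$ to vanish for every $n\ge 1$. That recursively determines $R_1=-1/m_1$, then $R_2$, and so on, exactly as in Corollary \ref{cor_power}; the $n$th equation is linear in $R_n$ with leading coefficient $m_1\neq 0$.

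The genuinely new issue — and the step I expect to be the main obstacle — is convergence. I would need to prove that the formal series $\sum_k R_k z^k$ has a positive radius of convergence, so that $\Delta(x)$ is a well-defined function for $x$ large (where $\alpha(x)$ is small), that its derivative obeys $|\Delta'(x)|\le C|\alpha'(x)|$ as Theorem \ref{th:non.gen} requires, and that the tail $\sum_{k>K}\frac{m_k(i)}{k!}\Delta^k(i)$ together with the Taylor remainder is summable in $i$. The hypothesis $\sup_{k\ge 1}\sum_{j=1}^\infty D_{k,j}r^j<\infty$ for some $r>0$ is precisely what should make this work: it gives a uniform bound $|m_k(i)-m_k|\le M_0$ for $|\alpha(i)|\le r$, and combined with a bound on $m_k$ itself — which follows from \eqref{Xi.eps}, since $m_k=\E\xi^k e^{\beta\xi}\le k!\,\varepsilon^{-k}\E e^{(\beta+\varepsilon)\xi}$ up to constants — one gets $m_k(i)/k!$ growing at most geometrically. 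A majorant-series / implicit-function argument on the analytic equation
$$
z+\sum_{k=1}^\infty\frac{1}{k!}\Bigl(m_k+\sum_{j=1}^\infty D_{k,j}w^j\Bigr)\Bigl(\sum_{j=1}^\infty R_j w^j\Bigr)^k=0
$$
then shows $\sum R_k w^k$ converges for $|w|$ small; I would invoke the analytic implicit function theorem, noting the left side is analytic in $(w,R\text{-variables})$ near the origin with nonvanishing $\partial/\partial(\text{linear part})$.

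Once $\Delta$ is under control, the rest is bookkeeping: split off $|\xi(i)|>\sqrt i$ using \eqref{Xi.eps} to make that contribution $O(e^{-\varepsilon i/2})$; on $|\xi(i)|\le\sqrt i$ use that $\Delta(i)\to 0$ and $\alpha(i)\to 0$ so the Taylor expansion is legitimate with a remainder that is $O(\Delta^{K+1}(i))$ for any fixed truncation level $K$, hence $O(\alpha^{K+1}(i))$, which is summable once $K$ is large enough (here one uses that $\alpha$ is bounded and, from \eqref{cor:integr}, that $\alpha$ is eventually small — indeed $|\alpha(x)|\le c(1+x)^{-1/(M+1)-\varepsilon}$-type control is implicit, or one simply notes $\alpha(x)\to 0$ from integrability of $\alpha'$ plus the summability forcing $\alpha(i)\to 0$ along integers). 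Together with the summability of $\gamma$-type error terms and of $\gamma_1(i-\sqrt i)$, this yields $\sum_i|\E e^{(\beta+\Delta(i))\xi(i)}-1|<\infty$, i.e. $\sum_i|Q^{(g)}(i,\Zp)-1|<\infty$ with $g(x)=e^{\int_0^x(\beta+\Delta(y))dy}$. Theorem \ref{th:non.gen} then gives $\pi(i)\sim c/g(i)=c\exp\{-\beta i-\sum_{k=1}^\infty R_k\int_0^i\alpha^k(x)dx\}$, which is the claim.
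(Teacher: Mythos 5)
Your proposal captures the paper's key ideas — a power-series ansatz $\Delta(x)=\sum_{k\ge 1}R_k\alpha^k(x)$, recursive determination of the $R_k$, convergence of the formal series via an analytic implicit-function argument, and an appeal to Theorem~\ref{th:non.gen} with $\beta(x)=\beta+\Delta(x)$ — so the route is essentially the same as the paper's. The one thing the paper does more cleanly, and which you could adopt to cut most of your final paragraph, is this: instead of picking the $R_k$ to make $\E e^{(\beta+\Delta(i))\xi(i)}-1$ ``summable,'' define $\beta(i)$ \emph{as the exact solution} of the local Cram\'er equation $\E e^{\beta(i)\xi(i)}=1$; then each term $\E e^{\beta(i)\xi(i)}-1$ is identically zero and condition~\eqref{E.beta.1} is trivially satisfied, with no need for the $|\xi(i)|>\sqrt i$ split, Taylor remainder bounds, or tail-truncation estimates. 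Writing $\Delta(i)=\beta(i)-\beta$ and substituting \eqref{cor.0.full}--\eqref{cor.1.full} turns that equation into $F(\alpha(i),\Delta(i))=0$ with $F(z,w)=\sum_{k,j\ge 0}(D_{k,j}/k!)z^jw^k$ and $D_{0,1}=1$; since $F(0,0)=0$ and $\partial_wF(0,0)=m_1>0$, the analytic implicit function theorem (the paper cites Flajolet--Sedgewick Theorem~B.4) gives $w(z)=\sum_n R_nz^n$ analytic near zero, hence $\Delta(x)=\sum_n R_n\alpha^n(x)$ for $|\alpha(x)|$ small, and the derivative bound $|\Delta'(x)|\le C|\alpha'(x)|\le C\gamma_1(x)$ required by Theorem~\ref{th:non.gen} follows from \eqref{cor:integr}. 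Your remark that the uniform bound $\sup_k\sum_j D_{k,j}r^j<\infty$ together with \eqref{Xi.eps} is what makes $F$ genuinely analytic in a neighbourhood of the origin is correct and is the same role it plays in the paper, so the convergence issue you flagged is resolved exactly as you anticipated; the remaining estimates you proposed are redundant once $\Delta$ is taken to be the analytic inverse rather than a formal series you must re-sum term by term.
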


\begin{proof}
For every $i\ge 1$ let $\beta(i)$ denote the positive solution of the equation
$$
\E e^{\beta(i)\xi(i)}=1.
$$
Since $\E e^{\beta\xi(i)}$ is finite for all $\gamma\le \beta+\varepsilon$, we may
rewrite the latter equation as Taylor's series:
$$
\E e^{\beta\xi(i)}+\sum_{k=1}^\infty\frac{\Delta^k(i)}{k!}\E\xi^k(i)e^{\beta\xi(i)}=1,
$$
where $\Delta(i)=\beta(i)-\beta$. Taking into account \eqref{cor.0.full} and
\eqref{cor.1.full}, we then get
\begin{equation}\label{cor.2.full}
\alpha(x)+\sum_{k=1}^\infty\frac{\Delta^k(x)}{k!}\sum_{j=0}^\infty D_{k,j}\alpha^j(x)=0,\quad x\ge 0.
\end{equation}
Set $D_{0,1}=1$ and define
$$
F(z,w):=\sum_{k,j\ge 0}\frac{D_{k,j}}{k!}z^jw^k.
$$
Therefore, \eqref{cor.2.full} can be written as $F(\alpha(x),\Delta(x))=0$. In other words,
we are looking for a function $w(z)$ satifying $F(z,w(z))=0$. Since $F(0,0)=0$ and
$\frac{\partial}{\partial w}F(0,0)=m_1>0$, we may apply Theorem B.4 from Flajolet and Sedgewick
\cite{FS} which says that $w(z)$ is analytic in a vicinity of zero, that is, there exists $\rho>0$
such that
$$
w(z)=\sum_{n=1}^\infty R_nz^n,\quad |z|<\rho.
$$
Consequently,
$$
\Delta(x)=\sum_{n=1}^\infty R_n\alpha^n(x)
$$
for all $i$ such that $|\alpha(i)|<\rho$.

Applying Theorem \ref{th:non.gen} with $\beta(x)=\beta+\Delta(x)$, we get
$$
\pi(i)\sim ce^{-\beta(i)-\int_0^i\Delta(y)dy}.
$$
Integrating $\Delta(y)$ piecewise, we complete the proof.
\end{proof}

We finish with the following remark. In the proof of Corollary \ref{cor_full}
we have adapted the derivation of the Cram\'er series in large deviations for sums
of independent random variables, see, for example, Petrov \cite{Pet75}. There
is just one difference: we needed analyticity of an implicit function instead
of analyticity of an inverse function.

\affiliationone{Denis Denisov\\
   School of Mathematics\\
   University of Manchester\\
   Oxford Road, Manchester M13 9PL, UK
   \email{denis.denisov@manchester.ac.uk}}
\affiliationtwo{Dmitry Korshunov\\
   Sobolev Institute of Mathematics\\
   Koptyuga pr. 4, Novosibirsk 630090, Russia
   \email{korshunov@math.nsc.ru}}
\affiliationthree{Vitali Wachtel\\
   Mathematical Institute\\
   University of Munich\\
   Theresienstr. 39, Munich 80333, Germany
   \email{wachtel@math.lmu.de}}

\end{document}